\documentclass[12pt,reqno]{article}
\tolerance=2000
\usepackage{amssymb}
\usepackage[mathscr]{eucal}
\usepackage{amsmath,amssymb,latexsym,theorem,enumerate,ifthen}
\usepackage[english]{babel}
\usepackage{color,url}
\usepackage{appendix}
\usepackage{graphicx}
\usepackage{subcaption}

\setlength{\oddsidemargin}{-5truemm}
\setlength{\evensidemargin}{-5truemm}
\setlength{\topmargin}{-1.6truecm}
\setlength{\textheight}{23.9cm}
\textwidth17.7cm
\parskip 2mm
\setlength{\abovedisplayskip}{10pt plus 2.5pt minus 7.5pt}
\setlength{\belowdisplayskip}{10pt plus 2.5pt minus 7.5pt}

\jot3mm

\newcommand{\NN}{\mathbb{N}}

\newcommand{\RR}{\mathbb{R}}

\newcommand{\bx}{{\boldsymbol{x}}}

\newcommand{\cE}{{\mathcal E}}

\newcommand{\cS}{{\mathcal S}}

\newcommand{\cT}{{\mathcal T}}

\newcommand{\cX}{{\mathcal X}}

\newcommand{\dd}{\mathrm{d}}
\newcommand{\ee}{\mathrm{e}}

\newcommand{\EE}{\operatorname{\mathbb{E}}}

\newcommand{\PP}{\operatorname{\mathbb{P}}}

\newcommand{\Diag}{\operatorname{Diag}}

\DeclareMathOperator*{\argmin}{arg\,min}

\newcommand{\comment}[1]{}

\renewcommand{\mid}{\,|\,}

\renewcommand{\leq}{\leqslant}
\renewcommand{\geq}{\geqslant}

\newcommand{\proofend}{\hfill\mbox{$\Box$}}

\numberwithin{equation}{section}

\theoremstyle{change} \theorembodyfont{\em}
\newtheorem{Lem}{Lemma.}[section]
\newtheorem{Thm}[Lem]{Theorem.}
\newtheorem{Pro}[Lem]{Proposition.}

\newtheorem{Cor}[Lem]{Corollary.}

\newtheorem{Def}[Lem]{Definition.}

\theorembodyfont{\rm}
\newtheorem{Ex}[Lem]{Example.}

\long\def\Eq#1#2{\ifthenelse{\equal{#1}{*}}
  {\begin{equation*}\begin{aligned}#2\end{aligned}\end{equation*}}
  {\begin{equation}\begin{aligned}\label{#1}#2\end{aligned}\end{equation}}}

\def\OnlyOnArXiv#1#2{\ifthenelse{\equal{#1}{Y}}{#2}{}}

\newenvironment{proof}{\noindent{\bf Proof.}}{\proofend}

\begin{document}

\begin{center}
 {\bfseries\Large Monotone representation and measurability \\[2mm] 
 of generalized $\psi$-estimators}

\vspace*{3mm}

{\sc\large
  M\'aty\'as $\text{Barczy}^{*,\diamond}$,
  Zsolt $\text{P\'ales}^{**}$ }

\end{center}

\vskip0.2cm

\noindent
 * HUN-REN–SZTE Analysis and Applications Research Group,
   Bolyai Institute, University of Szeged,
   Aradi v\'ertan\'uk tere 1, H--6720 Szeged, Hungary.

\noindent
 ** Institute of Mathematics, University of Debrecen,
    Pf.~400, H--4002 Debrecen, Hungary.

\noindent e-mail: barczy@math.u-szeged.hu (M. Barczy),
                  pales@science.unideb.hu  (Zs. P\'ales).

\noindent $\diamond$ Corresponding author.

\vskip0.2cm


{\renewcommand{\thefootnote}{}
\footnote{\textit{2020 Mathematics Subject Classifications\/}:
 62F10, 28A05, 26A48, 90C26}
\footnote{\textit{Key words and phrases\/}:
 $\psi$-estimator, $Z$-estimator, monotone functions, convex optimization, measurability, measurable diagonal.}
\vspace*{0.2cm}

\vspace*{-10mm}

\begin{abstract}
We investigate the monotone representation and measurability of generalized $\psi$-estimators introduced by the authors in 2022.
Our first main result, applying the unique existence of a generalized $\psi$-estimator, allows us to construct this estimator in terms of a function $\psi$, which is decreasing in its second variable. We then interpret this result as a bridge from a nonconvex optimization problem to a convex one.
Further, supposing that the underlying measurable space (sample space) has a measurable diagonal and some additional assumptions on $\psi$, we show that the measurability of a generalized $\psi$-estimator is equivalent to the measurability of the corresponding function $\psi$ in its first variable.
\end{abstract}


\section{Introduction}
\label{section_intro}

In both theoretical and applied statistics, $M$-estimators play a fundamental role, and a special subclass, 
 the class of $\psi$-estimators (also called $Z$-estimators), is also in the heart of investigations.
The $M$-estimators (where the letter $M$ refers to ''maximum likelihood-type'') were introduced by Huber \cite{Hub64, Hub67}.
Let $(X,\cX)$ be a measurable space, $\Theta$ be a Borel subset of $\RR$, and $\varrho:X\times\Theta\to\RR$
 be a function such that for each $t\in\Theta$, the function $X\ni x\mapsto \varrho(x,t)$ is measurable
 with respect to the sigma-algebra $\cX$.
Let $(\xi_k)_{k\geq 1}$ be a sequence of i.i.d.\ random variables with values in $X$ such that the distribution of $\xi_1$ depends on an unknown parameter $\vartheta \in\Theta$.
For each $n\geq 1$, Huber \cite{Hub64, Hub67} introduced an important estimator of $\vartheta$ based on the observations
$\xi_1,\ldots,\xi_n$ as a solution of the following minimization problem:
 \begin{align}\label{help_M_est_min_problem}
   \inf_{t\in\Theta}\sum_{i=1}^n \varrho(\xi_i,t),
 \end{align}
provided that such a solution exists.
One calls such a solution an $M$-estimator of the unknown parameter $\vartheta\in\Theta$ based on
 the i.i.d.\ observations $\xi_1,\ldots,\xi_n$.
For historical fidelity, we note that Huber \cite{Hub64} considered the special case when $X:=\RR$, $\Theta:=\RR$, and the function $\varrho$ depends only on $x-t$,
 i.e., $\varrho(x,t):=f(x-t)$, $x\in\RR$, $t\in\Theta$,
 with some given nonconstant function $f:\RR\to\RR$.
Turning back to the general case, under suitable regularity assumptions, the minimization problem  \eqref{help_M_est_min_problem} can be solved 
 by setting the derivative of the objective function (with respect to the unknown parameter) equal to zero:
 \[
 \sum_{i=1}^n \partial_2\varrho(\xi_i,t)=0, \qquad t\in\Theta,
 \]
 where $\partial_2 \varrho$ denotes the (partial) derivative of $\varrho$ with respect to its second variable.
In the statistical literature, $\partial_2\varrho$ and a solution of the equation above 
 are often denoted by $\psi$ and $\widehat\vartheta_{n,\psi}(\xi_1,\ldots,\xi_n)$, respectively,
 and hence in this case an $M$-estimator is often called a $\psi$-estimator, while other authors call it a $Z$-estimator (the letter $Z$ refers to ''zero'').
For a detailed exposition of $M$-estimators and $\psi$-estimators ($Z$-estimators), see, e.g., Kosorok \cite[Sections 2.2.5 and 13]{Kos} or van der Vaart \cite[Section 5]{Vaa}.

In our recent paper Barczy and P\'ales \cite{BarPal2}, we introduced the notion of weighted generalized $\psi$-estimators
 (recalled below in Definition \ref{Def_sign_change}), and we studied their existence and uniqueness,
 see also Theorem \ref{Thm_M_est_uniq}, Corollary \ref{Cor1} and Proposition \ref{Pro_M_est_uniq}.
In Barczy and P\'ales \cite{BarPal4}, we established several properties of (weighted) generalized $\psi$-estimators: mean-type, monotonicity and sensitivity properties, bisymmetry-type inequality and some asymptotic and continuity properties as well.
This paper is devoted to derive two more essential theoretical properties of such estimators related to the decreasingness of $\psi$ in its second variable and the measurability of $\psi$ in its first variable, 
 see Theorems \ref{Thm_conjecture} and \ref{Thm_measurability}, respectively.
 We then interpret Theorem \ref{Thm_conjecture} in the frame of optimization theory, see Theorem \ref{Thm_convexity}.

Throughout this paper, let $\NN$, $\RR$, $\RR_+$ and $\RR_{++}$ denote the sets of positive integers, real numbers, non-negative real numbers and positive real numbers, respectively.
An interval $\Theta\subseteq\RR$ will be called nondegenerate if it contains at least two distinct points.
Given a nondegenerate interval $\Theta\subseteq \RR$ and a function $f:\Theta\to \RR$, let us define
 \[
   \argmin_{t\in\Theta} f(t) := \big\{ t\in\Theta : f(s)\geq f(t)\;\; \text{for all $s\in\Theta$} \big\}.
 \]
Given a function $f:\RR^2\to\RR$, the partial derivatives of $f$ with respect to its first and second variables are denoted by
 $\partial_1 f$ and $\partial_2 f$, respectively.
For each $n\in\NN$, let us also introduce the set $\Lambda_n:=\RR_+^n\setminus\{(0,\ldots,0)\}$.

\begin{Def}\label{Def_sign_change}
Let $\Theta$ be a nondegenerate open interval of $\RR$. For a function $f:\Theta\to\RR$, consider the following three level sets
\[
  \Theta_{f>0}:=\{t\in \Theta: f(t)>0\},\qquad
  \Theta_{f=0}:=\{t\in \Theta: f(t)=0\},\qquad
  \Theta_{f<0}:=\{t\in \Theta: f(t)<0\}.
\]
We say that $\vartheta\in\Theta$ is a \emph{point of sign change (of decreasing type) for $f$} if
 \[
 f(t) > 0 \quad \text{for $t<\vartheta$,}
   \qquad \text{and} \qquad
    f(t)< 0 \quad  \text{for $t>\vartheta$.}
 \]
\end{Def}

Note that there can exist at most one element $\vartheta\in\Theta$ which is a point of sign change for $f$.
Further, if $f$ is continuous at a point $\vartheta$ of sign change, then $\vartheta$ is the unique zero of $f$.

Let $X$ be a nonempty set, $\Theta$ be a nondegenerate open interval of $\RR$.
Let $\Psi(X,\Theta)$ denote the class of real-valued functions $\psi:X\times\Theta\to\RR$ such that, for all $x\in X$, there exist $t_+,t_-\in\Theta$ such that $t_+<t_-$ and $\psi(x,t_+)>0 >\psi(x,t_-)$.
Roughly speaking, a function $\psi\in\Psi(X,\Theta)$ satisfies the following property: for all $x\in X$, the function $t\ni\Theta\mapsto \psi(x,t)$ changes sign (from positive to negative) on the interval $\Theta$ at least once.

\begin{Def}\label{Def_Tn}
We say that a function $\psi\in\Psi(X,\Theta)$
  \begin{enumerate}[(i)]
    \item \emph{possesses the property $[C]$ (briefly, $\psi$ is a $C$-function) if
           it is continuous in its second variable, i.e., if, for all $x\in X$,
           the mapping $\Theta\ni t\mapsto \psi(x,t)$ is continuous.}
    \item \emph{possesses the property $[T_n]$ (briefly, $\psi$ is a $T_n$-function)
           for some $n\in\NN$} if there exists a mapping $\vartheta_{n,\psi}:X^n\to\Theta$ such that,
           for all $\pmb{x}=(x_1,\dots,x_n)\in X^n$ and $t\in\Theta$,
           \begin{align*}
             \psi_{\pmb{x}}(t):=\sum_{i=1}^n \psi(x_i,t) \begin{cases}
                 > 0 & \text{if $t<\vartheta_{n,\psi}(\pmb{x})$,}\\
                 < 0 & \text{if $t>\vartheta_{n,\psi}(\pmb{x})$},
            \end{cases}
           \end{align*}
          that is, for all $\pmb{x}\in X^n$, the value $\vartheta_{n,\psi}(\pmb{x})$ is a point of sign change for the function $\psi_{\pmb{x}}$. If there is no confusion, instead of $\vartheta_{n,\psi}$ we simply write $\vartheta_n$.
          We may call $\vartheta_{n,\psi}(\pmb{x})$ as a generalized $\psi$-estimator for
         some unknown parameter in $\Theta$ based on the realization $\bx=(x_1,\ldots,x_n)\in X^n$. If, for each $n\in\NN$, $\psi$ is a $T_n$-function, then we say that \emph{$\psi$ possesses the property $[T]$ (briefly, $\psi$ is a $T$-function)}.
    \item \emph{possesses the property $[Z_n]$ (briefly, $\psi$ is a $Z_n$-function) for some $n\in\NN$} if it is a $T_n$-function and
    \[
   \psi_{\pmb{x}}(\vartheta_{n,\psi}(\pmb{x}))=\sum_{i=1}^n \psi(x_i,\vartheta_{n,\psi}(\pmb{x}))= 0
    \qquad \text{for all}\quad \pmb{x}=(x_1,\ldots,x_n)\in X^n.
    \]
    If, for each $n\in\NN$, $\psi$ is a $Z_n$-function, then we say that \emph{$\psi$ possesses the property $[Z]$ (briefly, $\psi$ is a $Z$-function)}.
    \item \emph{possesses the property $[T_n^{\pmb{\lambda}}]$ for some $n\in\NN$ and $\pmb{\lambda}=(\lambda_1,\ldots,\lambda_n)\in\Lambda_n$ (briefly, $\psi$ is a $T_n^{\pmb{\lambda}}$-function)} if there exists a mapping $\vartheta_{n,\psi}^{\pmb{\lambda}}:X^n\to\Theta$ such that, for all $\pmb{x}=(x_1,\dots,x_n)\in X^n$ and $t\in\Theta$,
          \begin{align*}
           \psi_{\pmb{x},\pmb{\lambda}}(t):= \sum_{i=1}^n \lambda_i\psi(x_i,t) \begin{cases}
                 > 0 & \text{if $t<\vartheta_{n,\psi}^{\pmb{\lambda}}(\pmb{x})$,}\\
                 < 0 & \text{if $t>\vartheta_{n,\psi}^{\pmb{\lambda}}(\pmb{x})$},
             \end{cases}
           \end{align*}
           that is, for all $\pmb{x}\in X^n$, the value $\vartheta_{n,\psi}^{\pmb{\lambda}}(\pmb{x})$ is
           a point of sign change for the function $\psi_{\pmb{x},\pmb{\lambda}}$.
           If there is no confusion, instead of $\vartheta_{n,\psi}^{\pmb{\lambda}}$ we simply write $\vartheta_n^{\pmb{\lambda}}$.
          We may call $\vartheta_{n,\psi}^{\pmb{\lambda}}(\pmb{x})$
          as a weighted generalized $\psi$-estimator for some unknown parameter in $\Theta$ based
          on the realization $\bx=(x_1,\ldots,x_n)\in X^n$ and weights $(\lambda_1,\ldots,\lambda_n)\in\Lambda_n$.
   \end{enumerate}
\end{Def}

Similarly to the property $[C]$ introduced in Definition \ref{Def_Tn}, we say that a function $\psi\in\Psi(X,\Theta)$ is
 increasing, continuously differentiable and convex in its second variable, if, for all $x\in X$, the mapping $\Theta\ni t\mapsto \psi(x,t)$ is increasing, continuously differentiable and convex, respectively.
It can be seen that if $\psi$ is continuous in its second variable, and, for some $n\in\NN$, it is a $T_n$-function, then it also a $Z_n$-function.
Further, if $\psi\in\Psi(X,\Theta)$ is a $T_n$-function for some $n\in\NN$, then $\vartheta_{n,\psi}$ is symmetric in the sense that
$\vartheta_{n,\psi}(x_1,\ldots,x_n) = \vartheta_{n,\psi}(x_{\pi(1)},\ldots,x_{\pi(n)})$ holds for all $x_1,\ldots,x_n\in X$ and all permutations $(\pi(1),\ldots,\pi(n))$ of $(1,\ldots,n)$.

Given properties $[P_1], \ldots, [P_q]$ introduced in Definition~\ref{Def_Tn} (where $q\in\NN$), the subclass of $\Psi(X,\Theta)$ consisting of
 elements possessing the properties $[P_1],\ldots,[P_q]$ will be denoted by $\Psi[P_1,\ldots,P_q](X,\Theta)$.

In Section 4 of Barczy and P\'ales \cite{BarPal2} and in Section 3 of Barczy and P\'ales \cite{BarPal3},
 one can find several examples for (generalized) $\psi$-estimators that are used
 in statistics, in particular, for estimating parameters of notable distributions.
To present some new examples compared to the ones in \cite{BarPal3, BarPal2}, next, we discuss two particular (usual) $\psi$-estimators, the so-called $t$-score moment estimator
 and the trimmed moment estimator, that are popular in robust statistics.
In particular, these two estimators can be well-applied for estimating parameters of heavy-tailed distributions. 

\begin{Ex}
First, we recall the notion of $t$-score moment estimators based on Stehlík et al.\ \cite[Section 2.1]{StePotWalFab}.
Let $a,b\in\RR$ be such that $a<b$, and $\xi$ be an absolutely continuous random variable such that 
 $\PP(\xi\in(a,b))=1$.
Assume that the distribution of $\xi$ depends on an unknown parameter $\vartheta\in\Theta$, where $\Theta$ is a Borel subset of $\RR$.
Let $f_\vartheta:\RR\to\RR$ denote the density function of $\xi$.
Assume that $f_\vartheta$ is positive and continuously differentiable on $(a,b)$.
Further, let $h:(a,b)\to\RR$ be a continuously differentiable function such that $h(x)>0$, $x\in(a,b)$.
The $t$-score (transformation-based score) function of $f_\vartheta$ is defined by $\cT_\vartheta:(a,b)\to\RR$,
 \[
  \cT_\vartheta(x):=\frac{1}{f_\vartheta(x)} \left(-\frac{f_\vartheta}{h}\right)'(x),\qquad x\in(a,b).
 \] 
Given an $n\in\NN$ and independent and identically distributed random variables $\xi_1,\ldots,\xi_n$ such that $\xi_1$ has the same distribution as $\xi$, the parameter $\vartheta$ can be estimated as a solution of the equation
 \begin{align*}
 \frac1n \sum_{i=1}^n \cT_\vartheta(\xi_i) = \EE(\cT_\vartheta(\xi_1)),\qquad \vartheta\in\Theta,
 \end{align*}
 provided that such a solution exists. 
Stehlík et al.\ \cite[equations (7) and (8)]{StePotWalFab} call this estimator as the $t$-score moment estimator of $\vartheta$.
 Note that the $t$-score moment estimator is a particular (usual) $\psi$-estimator corresponding to the function 
 $\psi:(a,b)\times \Theta\to\RR$ defined by $\psi(x,\vartheta):=\cT_\vartheta(x)-\EE(\cT_\vartheta(\xi_1))$, $x\in(a,b)$, $\vartheta\in\Theta$.
 
For the notion of trimmed moment estimators, see Brazauskas et al.\ \cite[Section 2.1]{BraJonZit}.    
Here we only note that this estimator is also a solution of an appropriate equation in terms of $\vartheta\in\Theta$,
 so it is also a particular (usual) $\psi$-estimator, see Brazauskas et al.\ \cite[Note 2.1]{BraJonZit}.
\proofend
\end{Ex}

Next, we recall some necessary and sufficient conditions for the properties $[T_n]$ and $[T_n^{\pmb{\lambda}}]$ introduced in Definition \ref{Def_Tn}, which were proved in Barczy and P\'ales \cite{BarPal2}.
For all $x,y\in X$ with $\vartheta_1(x)<\vartheta_1(y)$, let us introduce the map
 \begin{align}\label{function_newhanyados0}
   (\vartheta_1(x),\vartheta_1(y))\ni t \mapsto  -\frac{\psi(x,t)}{\psi(y,t)}.
 \end{align}

The following result can be found in Barczy and P\'ales \cite[ parts (iv), (v) and (vi) of Theorem 1]{BarPal2}.

\begin{Thm}\label{Thm_M_est_uniq}
Let $X$ be a nonempty set, $\Theta$ be a nondegenerate open interval of $\RR$, and $ \psi\in\Psi[T_1](X,\Theta)$.
\begin{enumerate}[(i)]\itemsep=-2pt
 \item If $\psi\in\Psi[T_n](X,\Theta)$ for infinitely many $n\in\NN$, then for each $x,y\in X$ with $\vartheta_1(x)<\vartheta_1(y)$, the function \eqref{function_newhanyados0} is increasing.
 \item If $\psi\in\Psi[T_2^{\pmb{\lambda}}](X,\Theta)$ for each $\pmb{\lambda}\in\Lambda_2$, then for each $x,y\in X$ with $\vartheta_1(x)<\vartheta_1(y)$, the function \eqref{function_newhanyados0} is strictly increasing.
\item If $\psi\in\Psi[Z_1](X,\Theta)$ and, for each $x,y\in X$ with $\vartheta_1(x)<\vartheta_1(y)$, the function \eqref{function_newhanyados0} is strictly increasing, then $ \psi\in\Psi [T_n^{\pmb{\lambda}}](X,\Theta)$ for each $n\in\NN$ and $\pmb{\lambda}\in\Lambda_n$.
\end{enumerate}
\end{Thm}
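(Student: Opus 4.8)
The three parts pull in opposite directions, so I would treat them separately: (i) and (ii) deduce (strict) monotonicity of the quotient \eqref{function_newhanyados0} from a sign-change hypothesis, while (iii) runs the implication backwards. Fix $x,y\in X$ with $\vartheta_1(x)<\vartheta_1(y)$ and abbreviate $q(t):=-\psi(x,t)/\psi(y,t)$; on $(\vartheta_1(x),\vartheta_1(y))$ one has $\psi(x,t)<0<\psi(y,t)$, so $q$ is well defined and positive there. I will use throughout that, by Definition~\ref{Def_sign_change}, the value of a function at its point of sign change is unconstrained. For (ii), given $t_1<t_2$ in $(\vartheta_1(x),\vartheta_1(y))$ I would choose the weights $\pmb{\lambda}=(\psi(y,t_1),-\psi(x,t_1))\in\Lambda_2$ (both entries positive) and look at $h:=\psi_{(x,y),\pmb{\lambda}}$, which has a point of sign change $\vartheta$ because $\psi\in\Psi[T_2^{\pmb{\lambda}}](X,\Theta)$. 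The choice of $\pmb{\lambda}$ makes $h(t_1)=0$, which forces $\vartheta=t_1$; hence $h(t_2)<0$, and rearranging $\psi(y,t_1)\psi(x,t_2)-\psi(x,t_1)\psi(y,t_2)<0$ gives exactly $q(t_1)<q(t_2)$.

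For (i) I would argue by contradiction, replacing arbitrary positive weights by repetitions. Suppose $q(t_1)>q(t_2)$ for some $t_1<t_2$ in the interval, and pick a rational $m/k$ in the nonempty interval $(q(t_2),q(t_1))$. Since $\psi$ is a $T_n$-function for infinitely many $n$, I can select such an $n$ large and integers $k,m\geq 1$ with $k+m=n$ and $m/k\in(q(t_2),q(t_1))$ (these ratios become dense as $n\to\infty$). With $\pmb{x}$ consisting of $k$ copies of $x$ and $m$ of $y$, the function $\psi_{\pmb{x}}=k\psi(x,\cdot)+m\psi(y,\cdot)$ has a change point $\vartheta$, and on the interval its sign equals that of $m/k-q(t)$; evaluating at $t_1$ and $t_2$ yields $t_2\leq\vartheta\leq t_1$, contradicting $t_1<t_2$. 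Thus $q$ is increasing.

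Part (iii) is the substantial one. Fixing $\pmb{x}=(x_1,\dots,x_n)$ and $\pmb{\lambda}\in\Lambda_n$, write $g:=\psi_{\pmb{x},\pmb{\lambda}}$ and $a:=\min_i\vartheta_1(x_i)$, $b:=\max_i\vartheta_1(x_i)$. Property $[T_1]$ already gives $g>0$ left of $a$ and $g<0$ right of $b$, so everything happens on $[a,b]$. The heart of the argument---and the step I expect to be the main obstacle---is to show that $g$ has no upward crossing: there are no $s<t$ with $g(s)\leq 0<g(t)$ or $g(s)<0\leq g(t)$. For fixed $s<t$ I would split the indices into $L=\{i:\vartheta_1(x_i)\leq s\}$, $M=\{i:s<\vartheta_1(x_i)<t\}$, $R=\{i:\vartheta_1(x_i)\geq t\}$, write $g=N+Q+P$ for the associated partial sums, and read off from $[Z_1]$ the signs $N(s)\leq0$, $N(t)<0$, $Q(s)>0$, $Q(t)<0$, $P(s)>0$, $P(t)\geq0$ (empty groups contributing $0$). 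The decisive computation is the determinant identity
\[
 g(s)P(t)-g(t)P(s)=\big(N(s)P(t)-N(t)P(s)\big)+\big(Q(s)P(t)-Q(t)P(s)\big),
\]
in which the first bracket equals $\sum_{i\in L,\,j\in R}\lambda_i\lambda_j\big(\psi(x_i,s)\psi(x_j,t)-\psi(x_i,t)\psi(x_j,s)\big)$ with every summand strictly positive---this $2\times2$ determinant being positive is precisely the strict increasingness of the quotient on $(\vartheta_1(x_i),\vartheta_1(x_j))\ni s,t$ (the boundary cases $\vartheta_1(x_i)=s$, $\vartheta_1(x_j)=t$ being settled directly by $[Z_1]$), while the second bracket is nonnegative by the sign table. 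Hence $g(s)P(t)-g(t)P(s)>0$ whenever $L,R\neq\emptyset$, whereas any upward crossing would both force $L,R\neq\emptyset$ and, directly from the signs of $g(s),g(t),P(s),P(t)$, render this same quantity $\leq0$, a contradiction.

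Once upward crossings are excluded, the sets $\{g>0\}$, $\{g=0\}$, $\{g<0\}$ are ordered from left to right, and it remains only to check that $\{g=0\}$ contains no nondegenerate interval wedged between the other two. I would rule this out by noting that on each interval between consecutive values among $\vartheta_1(x_1),\dots,\vartheta_1(x_n)$ the function $g$ is of the form $F+G$ with $F<0<G$ and $-F/G$ strictly increasing (again the determinant argument), so $g$ cannot vanish identically there. Setting $\vartheta:=\sup\{t:g(t)>0\}$ then makes $\vartheta$ a genuine point of sign change of $g$, which is exactly $[T_n^{\pmb{\lambda}}]$. The reason a naive pairwise or inductive approach does not close is the middle group $M$, whose indices flip sign between $s$ and $t$; the point of the determinant identity is that $M$ enters only through the manifestly nonnegative term $Q(s)P(t)-Q(t)P(s)$, so its presence never creates a spurious crossing.
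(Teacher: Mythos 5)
The paper never proves this theorem: it is recalled verbatim from Barczy and P\'ales \cite{BarPal2} (parts (iv)--(vi) of Theorem 2.12 there), so there is no in-paper proof to compare yours against. Judged on its own terms, your argument is correct, and all three parts check out.

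Parts (i) and (ii) are clean. In (ii), the weights $\pmb{\lambda}=(\psi(y,t_1),-\psi(x,t_1))\in\Lambda_2$ do force the sign-change point of $\psi_{(x,y),\pmb{\lambda}}$ to equal $t_1$ (any other location contradicts $h(t_1)=0$), and $h(t_2)<0$ rearranges exactly to $q(t_1)<q(t_2)$. In (i), the density claim is valid: for a fixed nonempty interval $(q(t_2),q(t_1))\subseteq(0,\infty)$ the admissible $m$ with $k+m=n$ fill an interval of length growing linearly in $n$, so any sufficiently large $n$ from the infinite set works, and the sign identity $\psi_{\pmb{x}}(t)=k\,\psi(y,t)\bigl(\tfrac{m}{k}-q(t)\bigr)$ on $(\vartheta_1(x),\vartheta_1(y))$ yields the contradiction $t_2\le\vartheta\le t_1$. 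In (iii), the $L/M/R$ split, the identity $g(s)P(t)-g(t)P(s)=\bigl(N(s)P(t)-N(t)P(s)\bigr)+\bigl(Q(s)P(t)-Q(t)P(s)\bigr)$, the strict positivity of the first bracket (a sum of positive $2\times2$ determinants, with $[Z_1]$ settling the boundary cases $\vartheta_1(x_i)=s$ or $\vartheta_1(x_j)=t$) and the nonnegativity of the second do exclude upward crossings, and the same determinant computation rules out a nondegenerate interval of zeros. Two points deserve one explicit sentence each rather than being left implicit: first, discard the indices with $\lambda_i=0$ at the outset, since otherwise ``every summand strictly positive'' and ``a crossing forces $L,R\neq\emptyset$'' are only true for the positively weighted indices; second, after setting $\vartheta:=\sup\{t:g(t)>0\}$ you still need $g<0$ strictly on $\Theta\cap(\vartheta,\infty)$, which follows because a zero at some $\vartheta'>\vartheta$ would force $g\equiv0$ on $(\vartheta,\vartheta')$ (by the initial/final-segment structure of $\{g>0\}$ and $\{g<0\}$), which your no-flat-interval lemma excludes. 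Neither is a gap.
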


We emphasize that, for $\psi\in\Psi[T_1](X,\Theta)$, the assertion (iii) of Theorem \ref{Thm_M_est_uniq}
 provides a sufficient condition for the existence and uniqueness of a weighted generalized $\psi$-estimator.
In the following statement, under the property $[Z_1]$, we recall two equivalent conditions for 
 the existence and uniqueness of a weighted generalized $\psi$-estimator due to Barczy and P\'ales \cite[Corollary 1]{BarPal2}.

\begin{Cor}\label{Cor1}
Let $X$ be a nonempty set, $\Theta$ be a nondegenerate open interval of $\RR$, and $\psi\in\Psi[Z_1](X,\Theta)$.
Then the following assertions are equivalent:
\begin{enumerate}[(i)]\itemsep=-2pt
\item For each $x,y\in X$ with $\vartheta_1(x)<\vartheta_1(y)$, the function \eqref{function_newhanyados0} is strictly increasing.

\item For each $\pmb{\lambda}\in\Lambda_2$, we have $\psi\in\Psi[T_2^{\pmb{\lambda}}](X,\Theta)$.
\item For each $n\in\NN$ and $\pmb{\lambda}\in\Lambda_n$, we have $\psi\in\Psi[T_n^{\pmb{\lambda}}](X,\Theta)$.
\end{enumerate}
\end{Cor}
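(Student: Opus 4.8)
The plan is to deduce all three equivalences from Theorem~\ref{Thm_M_est_uniq} by arranging a single cycle of implications
\[
\text{(ii)} \Longrightarrow \text{(i)} \Longrightarrow \text{(iii)} \Longrightarrow \text{(ii)},
\]
in which each arrow is furnished by one part of that theorem together with the standing hypothesis $\psi\in\Psi[Z_1](X,\Theta)$. Note first that $\Psi[Z_1](X,\Theta)\subseteq\Psi[T_1](X,\Theta)$ by the very definition of a $Z_1$-function, so the blanket assumption $\psi\in\Psi[T_1](X,\Theta)$ of Theorem~\ref{Thm_M_est_uniq} is automatically in force throughout.

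For (ii) $\Rightarrow$ (i), I would observe that (ii) is verbatim the hypothesis of part~(ii) of Theorem~\ref{Thm_M_est_uniq}, whose conclusion---that the map \eqref{function_newhanyados0} is strictly increasing on $(\vartheta_1(x),\vartheta_1(y))$ for every $x,y\in X$ with $\vartheta_1(x)<\vartheta_1(y)$---is exactly statement~(i). For (i) $\Rightarrow$ (iii), I would apply part~(iii) of Theorem~\ref{Thm_M_est_uniq}: its two hypotheses are precisely $\psi\in\Psi[Z_1](X,\Theta)$ (the standing assumption of the corollary) and the strict monotonicity of \eqref{function_newhanyados0} (statement~(i)), and its conclusion, namely $\psi\in\Psi[T_n^{\pmb{\lambda}}](X,\Theta)$ for all $n\in\NN$ and $\pmb{\lambda}\in\Lambda_n$, is statement~(iii). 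Finally, (iii) $\Rightarrow$ (ii) is immediate by specializing the quantifier over $n$ in (iii) to the value $n=2$.

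I do not expect any genuine obstacle here, since the corollary is essentially a repackaging of the three parts of Theorem~\ref{Thm_M_est_uniq} into a closed loop. The only point demanding care is the bookkeeping of hypotheses across the implications: one must check that the $Z_1$-property is actually invoked only in the step (i) $\Rightarrow$ (iii), that it is available there as a standing assumption, and that the remaining two steps rest solely on $\psi\in\Psi[T_1](X,\Theta)$ and on pure logic (specialization of a universally quantified statement). Once this is confirmed, closing the cycle yields the equivalence of (i), (ii) and (iii).
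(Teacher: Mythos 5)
Your proposal is correct and is exactly the intended derivation: the corollary (recalled from Barczy and P\'ales, where it appears as Corollary 2.13) follows by chaining parts (ii) and (iii) of Theorem~\ref{Thm_M_est_uniq} with the trivial specialization $n=2$, just as you do, and your observation that $\Psi[Z_1](X,\Theta)\subseteq\Psi[T_1](X,\Theta)$ correctly supplies the blanket hypothesis of that theorem. No gaps.
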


In part (ii) of the next proposition, we provide a sufficient condition (which does not involve the property $[Z_1]$)
 under which $\psi$ has the property $[T_n^{\pmb{\lambda}}]$ for each $n\in\NN$ and $\pmb{\lambda}\in\Lambda_n$.
This result can be found in Barczy and P\'ales \cite[Proposition 2]{BarPal2}.

\begin{Pro}\label{Pro_M_est_uniq}
Let $X$ be a nonempty set, $\Theta$ be a nondegenerate open interval of $\RR$, and
$\psi\in\Psi[T_1](X,\Theta)$.
\begin{enumerate}[(i)]\itemsep=-2pt
 \item If for each $x\in X$, the function $\Theta\ni t\mapsto \psi(x,t)$ is (strictly) decreasing, then for each $x,y\in X$ with $\vartheta_1(x)<\vartheta_1(y)$, the
       function \eqref{function_newhanyados0}  is (strictly) increasing.
 \item If for each $x\in X$, the function $\Theta\ni t\mapsto \psi(x,t)$ is strictly decreasing, then $\psi\in\Psi[T_n^{\pmb{\lambda}}](X,\Theta)$ for each $n\in\NN$ and $\pmb{\lambda}\in\Lambda_n$.
\end{enumerate}
\end{Pro}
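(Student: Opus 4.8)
The plan is to treat the two parts separately, proving (i) by a direct sign-and-monotonicity computation and (ii) by a self-contained argument that does not route through Corollary~\ref{Cor1}. For part (i), I would first pin down the signs of the numerator and denominator of \eqref{function_newhanyados0}. Fix $x,y\in X$ with $\vartheta_1(x)<\vartheta_1(y)$ and let $t\in(\vartheta_1(x),\vartheta_1(y))$. Since $\psi\in\Psi[T_1](X,\Theta)$, the value $\vartheta_1(x)$ is a point of sign change for $\psi(x,\cdot)$, so $t>\vartheta_1(x)$ gives $\psi(x,t)<0$; likewise $t<\vartheta_1(y)$ gives $\psi(y,t)>0$. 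Hence the quotient is well defined and positive, and I can write $-\psi(x,t)/\psi(y,t)=u(t)/v(t)$ with $u(t):=-\psi(x,t)>0$ and $v(t):=\psi(y,t)>0$. The decreasingness of $t\mapsto\psi(x,t)$ and $t\mapsto\psi(y,t)$ translates into $u$ being increasing and $v$ being decreasing on $(\vartheta_1(x),\vartheta_1(y))$. For $s<t$ in this interval the two-step estimate
\[
  \frac{u(s)}{v(s)}\le\frac{u(t)}{v(s)}\le\frac{u(t)}{v(t)}
\]
then yields that the quotient is increasing; in the strictly decreasing case, $u$ is strictly increasing and $v$ strictly decreasing, so the first inequality is strict and the quotient is strictly increasing. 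This part is mostly routine once the signs are fixed.

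For part (ii), rather than invoking part (i) together with Theorem~\ref{Thm_M_est_uniq}(iii) (which would require the property $[Z_1]$ that is not assumed here), I would argue directly. Fix $n\in\NN$, $\pmb{\lambda}=(\lambda_1,\dots,\lambda_n)\in\Lambda_n$, and $\pmb{x}=(x_1,\dots,x_n)\in X^n$, and set $I:=\{i:\lambda_i>0\}$, which is nonempty. Since each $t\mapsto\psi(x_i,t)$ is strictly decreasing and the weights are nonnegative with $I\neq\emptyset$, the map $\psi_{\pmb{x},\pmb{\lambda}}(t)=\sum_{i=1}^n\lambda_i\psi(x_i,t)$ is strictly decreasing on $\Theta$. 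It remains to show that $\psi_{\pmb{x},\pmb{\lambda}}$ has a (necessarily unique) point of sign change. Writing $a:=\min_{i\in I}\vartheta_1(x_i)$ and $b:=\max_{i\in I}\vartheta_1(x_i)$, for $t\in\Theta$ with $t<a$ every term with $i\in I$ is positive, so $\psi_{\pmb{x},\pmb{\lambda}}(t)>0$, and for $t\in\Theta$ with $t>b$ every such term is negative, so $\psi_{\pmb{x},\pmb{\lambda}}(t)<0$; such $t$ exist because $a,b\in\Theta$ and $\Theta$ is open. Setting $\vartheta:=\sup\{t\in\Theta:\psi_{\pmb{x},\pmb{\lambda}}(t)>0\}$, strict monotonicity forces $\psi_{\pmb{x},\pmb{\lambda}}>0$ to the left of $\vartheta$ and $\psi_{\pmb{x},\pmb{\lambda}}<0$ to the right (strictness rules out a zero beyond $\vartheta$), so $\vartheta$ is the required point of sign change and lies in $\Theta$. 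This shows $\psi\in\Psi[T_n^{\pmb{\lambda}}](X,\Theta)$.

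The main obstacle is the existence of the point of sign change in part (ii): strict decreasingness alone only produces a decreasing function, and one must combine it with the standing property $[T_1]$ to locate points $t<a$ where the weighted sum is positive and points $t>b$ where it is negative, using the openness of $\Theta$ to guarantee that such $t$ actually lie inside $\Theta$. One must also check $\vartheta\in\Theta$ and, crucially, use strictness to exclude the possibility that $\psi_{\pmb{x},\pmb{\lambda}}$ vanishes to the right of $\vartheta$; under mere decreasingness the sum could be identically zero on a subinterval and have no point of sign change, which is precisely why part (ii) assumes strict decreasingness whereas part (i) tolerates both cases.
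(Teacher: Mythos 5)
The paper itself does not prove this proposition: it is recalled verbatim from Barczy and P\'ales \cite{BarPal2} (Proposition 2.14 there), so there is no in-paper proof to compare against. Judged on its own, your argument is correct and complete. Part (i) is the standard sign-plus-monotonicity computation: on $(\vartheta_1(x),\vartheta_1(y))$ the property $[T_1]$ forces $\psi(x,t)<0<\psi(y,t)$, and the two-step estimate $\frac{u(s)}{v(s)}\leq\frac{u(t)}{v(s)}\leq\frac{u(t)}{v(t)}$ for the positive increasing numerator $u=-\psi(x,\cdot)$ and positive decreasing denominator $v=\psi(y,\cdot)$ does exactly what is needed, with strictness propagating through the first inequality. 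Part (ii) is handled well: you correctly observe that the route via Theorem \ref{Thm_M_est_uniq}(iii) or Corollary \ref{Cor1} is blocked because $[Z_1]$ is not assumed here, and your direct argument — strict decreasingness of $\psi_{\pmb{x},\pmb{\lambda}}$, positivity for $t<\min_{i\in I}\vartheta_1(x_i)$ and negativity for $t>\max_{i\in I}\vartheta_1(x_i)$ (both attained inside the open interval $\Theta$), and the supremum construction with strictness excluding a zero to the right — is sound. Your closing remark about why strictness is essential in (ii) (a merely decreasing sum could vanish on a subinterval and then has no point of sign change) is exactly the right observation.
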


The remaining part of the paper is divided into two sections.
In Section \ref{Sec_decreasing}, we formulate and prove a converse of the statement
 in part (i) of Proposition 2 in Barczy and P\'ales \cite{BarPal2} (see also part (i) of Proposition \ref{Pro_M_est_uniq}).
Roughly speaking, given a function $\psi\in\Psi[Z_1](X,\Theta)$, under some regularity assumptions on $\psi$ in its second variable
 and on the map $\vartheta_1$, we prove that the property that the function \eqref{function_newhanyados0} is increasing
 for all $x,y\in X$ with $\vartheta_1(x)<\vartheta_1(y)$ is equivalent to the existence of a positive and locally absolutely continuous function $p:\Theta\to\RR$ such that, for all $z\in X$, the map $\Theta\ni t\mapsto p(t)\psi(z,t)$ is decreasing on $\Theta$ and locally absolutely continuous
 on $\Theta\setminus\{\vartheta_1(z)\}$, see Theorem \ref{Thm_conjecture}.
We also demonstrate Theorem \ref{Thm_conjecture} by investigating the maximum likelihood estimator
 of the mean of a normally distributed random variable with a given variance, and that of the shape parameter of a random variable with Pareto distribution having scale parameter $1$, see Examples \ref{Ex_mle} and \ref{Ex_pareto}, respectively.
Furthermore, we present an application of Theorem \ref{Thm_conjecture} related to convex optimization theory.
Namely, supposing that we are given a family of minimization problems related to continuous functions that are strictly decreasing up to a point and strictly increasing from that point on (such functions belong to the set of quasi-convex functions),
 we show that there exists a family of minimization problems related to some appropriate convex functions such that
 the unique solutions of the corresponding members of the two family of minimization problems coincide, see Theorem \ref{Thm_convexity}.
In Section \ref{Sec_measurability}, supposing that the underlying measurable space $X$ has a measurable diagonal,
 $\psi$ has the property $[Z]$ and another additional assumption on $\psi$, we prove that the measurability of
 a generalized $\psi$-estimator is equivalent to the measurability of the corresponding function $\psi$ in its first variable,
 see Theorem \ref{Thm_measurability}.
For the proof of Theorem \ref{Thm_measurability}, we needed to show the following property of measurable spaces,
 which may be interesting on its own right as well.
Given a measurable space $(X,\cX)$ with a measurable diagonal (i.e., $\{(x,x) : x\in X \}$ belongs to the product sigma-field $\cX\times\cX=\cX^2$),
 we show that, for each $n\in\NN$, the set $\big\{ (x_1,\ldots,x_n)\in X^n : x_1=\cdots = x_n \big\}$
 belongs to the $n$-fold product sigma-algebra $\cX^n$, see Lemma \ref{Lem_nfold_diag_measurable}.

\section{Generalized $\psi$-estimators and the decreasingness of $\psi$ in its second variable}
\label{Sec_decreasing}

In this section, we investigate how we could formulate and prove a converse of the statement in part (i) of Proposition 2 in Barczy and P\'ales \cite{BarPal2} (see also Proposition \ref{Pro_M_est_uniq}). Before formulating our result, we recall some notions from real analysis.
Given an open set $U\subseteq\RR$, we say that a function $h:U\to\RR$ is locally absolutely continuous if its restriction to any compact subinterval of $U$ is absolutely continuous.
If $h:U\to\RR$ is locally absolutely continuous, then it is continuous as well.
On the other hand, if $h:U\to\RR$ is continuously differentiable, then it is locally Lipschitz continuous and locally absolutely continuous as well.
Recall that if $h:U\to\RR$ is locally absolutely continuous, then $h'$ exists almost everywhere on $U$ and, for all $x,\tau\in U$ with $[x,\tau]\subseteq U$ or $[\tau,x]\subseteq U$, we have $h(x) = h(\tau) + \int_\tau^x h'(t)\,\dd t$, which we will refer to as the Newton--Leibniz formula (see, e.g., Hewitt and Stromberg \cite[Theorems (18.16) and (18.17)]{HewStr}.
We note that, in general, the almost everywhere differentiability of $h$ does not imply the validity of the Newton--Leibniz formula (for example, one can choose $h$ as the Cantor staircase function on $[0,1]$, which has zero derivative almost everywhere, and takes the values $0$ and $1$ at $0$ and $1$, respectively).

Recall also that, given a nondegenerate open interval $\Theta$ of $\RR$, an extended real-valued function $h:\Theta\to\overline\RR$
 is lower semicontinuous at a point $t_0\in\Theta$ if, for all $y\in\RR$ with $y<h(t_0)$,
 there exists a neighbourhood $U\subseteq \Theta$ of $t_0$ such that $y<h(t)$ holds for all $t\in U$ (equivalently, $\liminf_{t\to t_0} h(t)\geq h(t_0)$).
Similarly, an extended real-valued function $h:\Theta\to\overline\RR$ is upper semicontinuous at a point $t_0\in\Theta$ if for all $y\in\RR$ with $y>h(t_0)$,
 there exists a neighbourhood $U\subseteq \Theta$ of $t_0$ such that $y>h(t)$ for all $t\in U$
 (equivalently, $\limsup_{t\to t_0} h(t)\leq h(t_0)$).
A function $h:\Theta\to\overline\RR$ is called upper (lower) semicontinuous if it is upper (lower) semicontinuous at every point of $\Theta$.
From the definitions, it readily follows that an extended real-valued lower (upper) semicontinuous function on $\Theta$ is locally bounded from below (above) on $\Theta$, i.e., for every point of $\Theta$ there exists a neighbourhood on which the function in question is bounded below (above).
Furthermore, it is also known that an extended real-valued lower (upper) semicontinuous function on $\Theta$ is Borel measurable.
One can easily check that a function $h:\Theta\to\RR$ is continuous if and only if it is lower and upper semicontinuous.

Our forthcoming result is motivated by Dar\'oczy and P\'ales \cite{DarPal82} and P\'ales \cite{Pal88a}.

\begin{Thm}\label{Thm_conjecture}
Let $X$ be a nonempty set, $\Theta$ be a nondegenerate open interval of $\RR$, and $\psi\in\Psi[Z_1](X,\Theta)$.
Assume that, for all $z\in X$, the map $ \Theta\setminus\{\vartheta_1(z)\}\ni t\mapsto \psi(z,t)$ is continuously differentiable and,
 for all $t\in\Theta$, there exist $x,y\in X$ such that $\vartheta_1(x)<t< \vartheta_1(y)$.
Then the following two assertions are equivalent:
\vspace{-3mm}
\begin{enumerate}[(i)]
 \item For all $x,y\in X$ with $\vartheta_1(x)<\vartheta_1(y)$, the map \eqref{function_newhanyados0} is increasing.
 \item There exists a positive and locally absolutely continuous function $p:\Theta\to\RR$ such that,
       for all $z\in X$, the map
       \begin{equation}\label{prod_map}
          \Theta\ni t\mapsto p(t)\psi(z,t)
       \end{equation}
       is decreasing on $\Theta$ and locally absolutely continuous on $\Theta\setminus\{\vartheta_1(z)\}$.
\end{enumerate}
\end{Thm}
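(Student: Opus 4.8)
The plan is to pass from the monotonicity condition (i) to a pointwise inequality between logarithmic derivatives, and then to realise the multiplier $p$ as the exponential of a primitive of a suitably chosen density. Throughout, for $z\in X$ and $t\in\Theta\setminus\{\vartheta_1(z)\}$ I would write $q(z,t):=\partial_2\psi(z,t)/\psi(z,t)$, which is well defined and continuous in $t$ on each of the two components of $\Theta\setminus\{\vartheta_1(z)\}$, since $\psi(z,\cdot)$ is continuously differentiable and nonzero there. On the interval $(\vartheta_1(x),\vartheta_1(y))$ one has $\psi(x,\cdot)<0<\psi(y,\cdot)$, so the map \eqref{function_newhanyados0} is positive and continuously differentiable; computing its derivative (equivalently, the derivative of its logarithm) shows that it is increasing exactly when $q(x,t)\ge q(y,t)$ for all $t\in(\vartheta_1(x),\vartheta_1(y))$. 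Thus (i) is equivalent to the pointwise statement that $q(x,t)\ge q(y,t)$ whenever $\vartheta_1(x)<t<\vartheta_1(y)$.

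For the implication (ii)$\Rightarrow$(i) I would argue directly. Given such a $p$, on $(\vartheta_1(x),\vartheta_1(y))$ I rewrite the map \eqref{function_newhanyados0} as $\big(-p(t)\psi(x,t)\big)/\big(p(t)\psi(y,t)\big)$. Here the numerator is positive and increasing (being the negative of the decreasing, negative function $p\psi(x,\cdot)$) while the denominator is positive and decreasing, so their quotient is increasing, which is (i).

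The substance is the converse (i)$\Rightarrow$(ii), where the idea is to produce $p$ with a prescribed logarithmic derivative. Differentiating \eqref{prod_map} shows that $t\mapsto p(t)\psi(z,t)$ is decreasing on each side of $\vartheta_1(z)$ exactly when the logarithmic derivative $r:=p'/p$ satisfies $r(t)\le -q(z,t)$ for $t<\vartheta_1(z)$ and $r(t)\ge -q(z,t)$ for $t>\vartheta_1(z)$. Collecting these constraints over all $z$, I need a function $r$ with
\[
\sup_{z:\,\vartheta_1(z)<t}\big(-q(z,t)\big)\;\le\; r(t)\;\le\; \inf_{z:\,\vartheta_1(z)>t}\big(-q(z,t)\big),\qquad t\in\Theta,
\]
and the reformulation of (i) above is precisely the assertion that the left-hand side never exceeds the right-hand side, so this admissible band is nonempty. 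I would take $r$ to be the upper edge $r_+(t):=\inf_{z:\,\vartheta_1(z)>t}\big(-q(z,t)\big)$; the density hypothesis (for every $t$ there are $x,y$ with $\vartheta_1(x)<t<\vartheta_1(y)$) guarantees that the relevant index sets are nonempty, and, by fixing one $x_0$ and one $y_0$ on each compact subinterval, that $r_+$ is squeezed between the two continuous functions $-q(x_0,\cdot)$ and $-q(y_0,\cdot)$, hence is locally bounded. Finally I would set $p(t):=\exp\!\big(\int_{t_0}^t r_+(s)\,\dd s\big)$ for a fixed $t_0\in\Theta$; this is positive and locally absolutely continuous, and one checks that $p\psi(z,\cdot)$ has an almost-everywhere derivative of the correct sign on each side of $\vartheta_1(z)$. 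Since $p$ is continuous, $p\psi(z,\cdot)$ is locally absolutely continuous off $\vartheta_1(z)$, and the vanishing of $\psi(z,\cdot)$ at $\vartheta_1(z)$ (property $[Z_1]$) together with the sign change glues the two monotone pieces into a function decreasing on all of $\Theta$.

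I expect the main obstacle to be the measurability, and hence the local integrability, of the selected density $r_+$, since a priori it is an infimum of an uncountable family. The key point I would establish is that $r_+$ is upper semicontinuous: given $t_0$ and $\varepsilon>0$, choose $z_0$ with $\vartheta_1(z_0)>t_0$ and $-q(z_0,t_0)<r_+(t_0)+\varepsilon$; for $t$ near $t_0$ one still has $\vartheta_1(z_0)>t$, so $r_+(t)\le -q(z_0,t)$, and letting $t\to t_0$ and using the continuity of $q(z_0,\cdot)$ at $t_0$ yields $\limsup_{t\to t_0}r_+(t)\le r_+(t_0)$. Being upper semicontinuous, $r_+$ is Borel measurable, and together with the local boundedness obtained above this makes it locally integrable, which is exactly what is needed to define $p$ as a locally absolutely continuous function.
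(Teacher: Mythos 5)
Your proposal is correct and follows essentially the same route as the paper's proof: your $r_+(t)=\inf_{z:\,\vartheta_1(z)>t}(-q(z,t))$ is exactly $-q_*(t)$ in the paper's notation, your $p=\exp(\int r_+)$ coincides with the paper's \eqref{formula_p}, and the semicontinuity, local boundedness via the density hypothesis, almost-everywhere sign of the derivative, Newton--Leibniz integration, and gluing at $\vartheta_1(z)$ via $[Z_1]$ all match (your direct $\varepsilon$-argument for upper semicontinuity just unpacks the general fact the paper cites). The only nit is that the local absolute continuity of $p\,\psi(z,\cdot)$ off $\vartheta_1(z)$ uses that $p$ is locally absolutely continuous (which you have), not merely continuous as your phrasing suggests.
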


\begin{proof}
First, assume that assertion (i) holds.
Then, for all $x,y\in X$ with $\vartheta_1(x)<\vartheta_1(y)$, the increasingness and the differentiability of the map \eqref{function_newhanyados0} imply that,
 for all $t\in (\vartheta_1(x),\vartheta_1(y))$, we have
 \[
 \frac{\dd}{\dd t} \left( -\frac{\psi(x,t)}{\psi(y,t)} \right)
       = -\frac{\partial_2\psi(x,t)\cdot \psi(y,t) - \psi(x,t)\cdot \partial_2\psi(y,t)}{(\psi(y,t))^2} \geq 0,
 \]
 which yields that
 $$
   \partial_2 \psi(x,t)\cdot\psi(y,t)
   \leq \partial_2 \psi(y,t)\cdot\psi(x,t).
 $$
Therefore, for all $t\in\Theta$ and $x,y\in X$ with $\vartheta_1(x)<t<\vartheta_1(y)$, we have $\psi(x,t)<0$ and $\psi(y,t)>0$, and hence
\begin{equation}\label{help_3}
   \frac{\partial_2 \psi(y,t)}{\psi(y,t)}\leq\frac{\partial_2 \psi(x,t)}{\psi(x,t)}.
\end{equation}
Define $q_*:\Theta\to\overline{\RR}$ and $q^*:\Theta\to\overline{\RR}$ by
 \begin{align}\label{formulae_qs}
  \begin{split}
   &q_*(t):=\sup\bigg\{\frac{\partial_2 \psi(y,t)}{\psi(y,t)}\,\bigg|\,y\in X,\,t<\vartheta_1(y)\bigg\},\\[1mm]
   &q^*(t):=\inf\bigg\{\frac{\partial_2 \psi(x,t)}{\psi(x,t)}\,\bigg|\,x\in X,\,\vartheta_1(x)<t\bigg\}
  \end{split}
 \end{align}
for $t\in\Theta$. In view of the assumptions of the theorem, we can see that $q_*(t)>-\infty$ and $q^*(t)<\infty$ for all $t\in \Theta$.
Since the (pointwise) supremum of an arbitrary family of lower semicontinuous functions is also lower semicontinuous, the (pointwise) infimum of an arbitrary family of upper semicontinuous functions is also upper semicontinuous, and, by the assumptions, for all $z\in X$, the map $\Theta\setminus\{\vartheta_1(z)\}\ni t \mapsto \frac{\partial_2 \psi(z,t)}{\psi(z,t)}$ is continuous (and hence lower and upper semicontinuous), we can see that $q_*$ is a lower semicontinuous function on $\Theta$ and $q^*$ is upper semicontinuous on $\Theta$.
Consequently, $q_*$ and $q^*$ are also measurable.
It follows from these properties that $q_*$ is locally bounded from below on $\Theta$, and $q^*$ is locally bounded from above on $\Theta$.
 Furthermore, by the definitions of these functions and due to the inequality \eqref{help_3}, for all $t\in\Theta$ and $x,y\in X$ with $\vartheta_1(x)<t<\vartheta_1(y)$, we get
 \begin{align}\label{help_4}
   \frac{\partial_2 \psi(y,t)}{\psi(y,t)}
   \leq q_*(t)\leq q^*(t)
   \leq \frac{\partial_2 \psi(x,t)}{\psi(x,t)}.
 \end{align}
Therefore, using the assumption that, for all $t\in\Theta$, there exist $x,y\in X$ such that $\vartheta_1(x)<t<\vartheta_1(y)$, we have both functions $q_*$ and $q^*$ are finite valued, moreover locally bounded on $\Theta$.
Consequently, $q_*$ is bounded on every compact subinterval of $\Theta$, and, using that a bounded and Borel measurable function defined on a Borel measurable set of $\RR$ having finite Lebesgue measure is Lebesgue integrable, we get that $q_*$ is Lebesgue integrable on every compact subinterval of $\Theta$.
Therefore, by the fundamental theorem of Lebesgue integral calculus, for all $\tau\in\Theta$,
 the map $\Theta\ni t\mapsto \int_\tau^t q_*(s)\,\dd s\in\RR$ is locally absolutely continuous.

Fix $\tau\in\Theta$ and define $p:\Theta\to\RR_{++}$ by
 \begin{align}\label{formula_p}
   p(t):=\exp\bigg(-\int_\tau^t q_*(s)\,\dd s\bigg), \qquad t\in\Theta.
 \end{align}
Then $p$ is locally absolutely continuous (in particular, it is continuous), because it is known that the composition of a locally Lipschitz continuous function
 (in our case the exponential function) and a locally absolutely continuous function is locally absolutely continuous.
Consequently, $p$ is differentiable almost everywhere on $\Theta$, and $p'(t) = - p(t)q_*(t)$ for a.e.\ $t\in\Theta$.
Using \eqref{help_4}, we have that
 \begin{align}\label{help_5}
   -\frac{p'(t)}{p(t)} = q_*(t) \leq q^*(t),
   \qquad \hbox{a.e. }t\in\Theta.
 \end{align}
Let $\Theta^\psi$ denote the set of those points $t\in\Theta$, where the above equality and inequality hold.
Then we have that the Lebesgue measure of $\Theta\setminus\Theta^\psi$ is zero.
By the definitions of $q_*$ and $q^*$, for all $t\in \Theta^\psi$ and $x,y\in X$ with $\vartheta_1(x)<t<\vartheta_1(y)$, the inequalities \eqref{help_4} and \eqref{help_5} yield that
 \begin{equation}\label{help_1}
   \frac{\partial_2 \psi(y,t)}{\psi(y,t)}
   \leq -\frac{p'(t)}{p(t)}
   \leq \frac{\partial_2 \psi(x,t)}{\psi(x,t)}.
 \end{equation}
Let $z\in X$ be fixed arbitrarily and let $t\in \Theta^\psi$. If $\vartheta_1(z)<t$, then $\psi(z,t)<0$ and, with $x:=z$, the right hand side of the inequality \eqref{help_1} implies that
 \begin{equation}\label{help_2}
   \frac{\dd}{\dd s}\big(p(s)\psi(z,s)\big)\bigg|_{s=t}
   =p'(t)\psi(z,t)+p(t)\partial_2\psi(z,t)\leq0.
 \end{equation}
If $t<\vartheta_1(z)$, then $\psi(z,t)>0$ and, with $y:=z$, the left hand side of the inequality \eqref{help_1} yields that \eqref{help_2} is also valid.

All in all, we have verified that, for all $z\in X$, the map \eqref{prod_map} is differentiable at the points of the set
$\Theta^\psi\setminus\{\vartheta_1(z)\}$ with a nonpositive derivative and the Lebesgue measure of $\Theta\setminus(\Theta^\psi\setminus\{\vartheta_1(z)\})$ is zero.
On the other hand, for all $z\in X$, the map \eqref{prod_map} is also locally absolutely continuous on $\Theta\setminus\{\vartheta_1(z)\}$,
 since it is the product of two locally absolutely continuous functions (the local absolute continuity of
 $\Theta{\setminus\{\vartheta_1(z)\}}\ni t \mapsto \psi(z,t)$ follows from its continuous differentiability).
Therefore (as a consequence of the Newton--Leibniz formula) we can obtain that, for all $z\in X$, the map \eqref{prod_map} is decreasing and positive on $\Theta\cap(-\infty,\vartheta_1(z))$, and decreasing and negative on $\Theta\cap(\vartheta_1(z),\infty)$.
Indeed, if $z\in X$ and $t_1,t_2\in\Theta\cap(-\infty,\vartheta_1(z))$ with $t_1<t_2$ or $t_1,t_2\in\Theta\cap(\vartheta_1(z),\infty)$ with $t_1<t_2$, then we have
 \begin{align*}
   p(t_2)\psi(z,t_2)
     & = p(t_1)\psi(z,t_1)
         + \int_{t_1}^{t_2} \frac{\dd}{\dd t}\big(p(t)\psi(z,t)\big)\,\dd t\\
     & = p(t_1)\psi(z,t_1)
          + \int_{(t_1,t_2)\cap (\Theta^\psi\setminus\{\vartheta_{1}(z)\}) } \frac{\dd}{\dd t}\big(p(t)\psi(z,t)\big)\,\dd t
       \leq p(t_1)\psi(z,t_1) ,
 \end{align*}
 as desired.
Since $p(t)>0$, $t\in\Theta$, by the definition of $\vartheta_{1}(z)$, we have $p(t)\psi(z,t)>0$ if $t\in\Theta\cap(-\infty,\vartheta_1(z))$, and $p(t)\psi(z,t)<0$ if $t\in\Theta\cap(\vartheta_1(z),\infty)$.
By the assumption $\psi(z,\vartheta_1(z))=0$, $z\in X$, the map \eqref{prod_map} equals $0$ at $t=\vartheta_1(z)$,
 and therefore it is decreasing on the entire interval $\Theta$.
Furthermore, it is easy to see that, for all $z\in X$, the map \eqref{prod_map} is locally absolutely continuous on $\Theta\setminus\{\vartheta_1(z)\}$,
 because it is the product of the locally absolutely continuous function $p:\Theta\to\RR$ and the map $t \mapsto \psi(z,t)$, which, by assumption,
 is continuously differentiable and hence locally absolutely continuous on $\Theta\setminus\{\vartheta_1(z)\}$.
We note that, in general, it can happen that the map \eqref{prod_map} is not continuous at $\vartheta_1(z)$.
We finished the proof of the implication $(i)\Longrightarrow(ii)$.

Next, we assume that assertion (ii) holds, i.e., assume that there exists a positive and locally absolutely continuous function $p:\Theta\to\RR$ such that,
 for all $z\in X$, the map \eqref{prod_map} is decreasing.
Let $x,y\in X$ with $\vartheta_1(x)<\vartheta_1(y)$.
Then, for all $t,s\in\Theta$ with $\vartheta_1(x)<t<s<\vartheta_1(y)$, we have
that
$$
  p(t)\psi(x,t)\geq p(s)\psi(x,s)
  \qquad\mbox{and}\qquad
  p(t)\psi(y,t)\geq p(s)\psi(y,s).
$$
Using that $\psi$ is a $Z_1$-function, we have $\psi(x,s)<0<\psi(y,s)$ and this together with $p(t)>0$ yields that
$$
  \frac{\psi(x,t)}{\psi(x,s)}
  \leq\frac{p(s)}{p(t)}
  \leq\frac{\psi(y,t)}{\psi(y,s)}.
$$
Hence
$$
  \frac{\psi(x,t)}{\psi(x,s)}
  \leq\frac{\psi(y,t)}{\psi(y,s)},
$$
which together with $\psi(y,t)>0$ implies
$$
  -\frac{\psi(x,t)}{\psi(y,t)}
  \leq-\frac{\psi(x,s)}{\psi(y,s)}.
$$
This inequality shows that the map \eqref{function_newhanyados0} is increasing, as desired.

Finally, we note that while proving the implication $(ii)\Longrightarrow(i)$, we did not use the assumptions that $p$ was locally absolutely continuous and that, for all $z\in X$, the map $\Theta\setminus\{\vartheta_1(z)\}\ni t\mapsto \psi(z,t)$ was continuously differentiable.
We also point out the fact that the continuity of the map $\Theta\setminus \{\vartheta_1(z)\}\ni t \mapsto \partial_2\psi(z,t)$ is extensively used proving the implication $(i)\Longrightarrow(ii)$, for example, when we derive that $\Theta\setminus {\{\vartheta_1(z)\} } \ni t\mapsto \psi(z,t)$
is locally absolutely continuous.
\end{proof}

Concerning the verifiability of the conditions of Theorem \ref{Thm_conjecture}, we mention that, if a function $\psi$ is given, then, in many examples, it is easy to check its $Z_1$-property and one can explicitly determine $\vartheta_1$, therefore,
by methods in calculus, one can also check that whether the map $\Theta\setminus\{\vartheta_1(z)\}\ni t\mapsto \psi(z,t)$ is continuously differentiable or not.
The condition that, for all $t\in\Theta$, there exist $x,y\in X$ such that $\vartheta_1(x)<t< \vartheta_1(y)$, is also verifiable provided that we have an expression for $\vartheta_1$ in our hands.

Next, we interpret Theorem~\ref{Thm_conjecture} from a convex optimization theoretical point of view.
Namely, supposing that we are given a family of minimization problems related to continuous functions that are strictly decreasing up to a point
 and strictly increasing from that point on (such functions belong to the set of quasi-convex functions),
 we show that there exists a family of minimization problems related to some appropriate convex functions such that
 the unique solutions of the corresponding members of the two family of minimization problems coincide.

\begin{Thm}\label{Thm_convexity}
Let $X$ be a nonempty set, $\Theta$ be a nondegenerate open interval of $\RR$, and $\varrho:X\times\Theta\to\RR$ be a function.
Assume that, for all $z\in X$, the map $\Theta\ni t\mapsto\varrho(z,t)$ is continuously differentiable and the function $\psi:X\times\Theta\to\RR$ defined by
 \[
  \psi(z,t):=-\partial_2\varrho(z,t)\qquad z\in X,\,t\in \Theta,
 \]
belongs to $\Psi[T,Z_1](X,\Theta)$. In addition, suppose that
\begin{enumerate}[(a)]
 \item for all $t\in\Theta$, there exist $x,y\in X$ such that $\vartheta_1(x)<t< \vartheta_1(y)$,
 \item for all $z\in X$, the map $\Theta\setminus\{\vartheta_1(z)\}\ni t\mapsto\varrho(z,t)$ is twice continuously differentiable.
\end{enumerate}
Then there exists a function $\varrho^*:X\times\Theta\to\RR$ such that
 \begin{enumerate}[(i)]
 \item for all $z\in X$, the map $\Theta\ni t\mapsto\varrho^*(z,t)$ is convex and continuously differentiable,
 \item for all $n\in\NN$ and $x_1,\dots,x_n\in X$,
        \[
           \{\vartheta_n(x_1,\dots,x_n)\}
           =\argmin_{t\in\Theta} \sum_{i=1}^n \varrho(x_i,t)
           =\argmin_{t\in\Theta} \sum_{i=1}^n \varrho^*(x_i,t),
        \]
        that is, $\vartheta_n(x_1,\dots,x_n)$ is the common unique (global) minimum point  in $\Theta$ for the two maps $\Theta \ni t\mapsto \sum_{i=1}^n \varrho(x_i,t)$ and $\Theta \ni t\mapsto \sum_{i=1}^n \varrho^*(x_i,t)$.
 \end{enumerate}
\end{Thm}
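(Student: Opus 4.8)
The plan is to feed the function $\psi:=-\partial_2\varrho$ into Theorem~\ref{Thm_conjecture} in order to manufacture a positive weight $p$, and then to define $\varrho^*$ as an antiderivative (in its second variable) of $-p\psi$. The guiding idea is that $\psi(z,\cdot)=-\partial_2\varrho(z,\cdot)$, so the derivative of $t\mapsto\sum_{i}\varrho(x_i,t)$ is exactly $-\psi_{\bx}$; multiplying by the positive weight $p(t)$ preserves the sign of this derivative while — and this is the whole point — turning the merely quasi-convex maps $\varrho(z,\cdot)$ into genuinely convex ones, since Theorem~\ref{Thm_conjecture} guarantees $t\mapsto p(t)\psi(z,t)$ is decreasing, so that $-p(t)\psi(z,t)$ is increasing and hence qualifies as the derivative of a convex function.

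First I would check that the hypotheses of Theorem~\ref{Thm_conjecture} are met. Since $\varrho(z,\cdot)$ is continuously differentiable on $\Theta$, the function $\psi(z,\cdot)=-\partial_2\varrho(z,\cdot)$ is continuous on all of $\Theta$; together with $\psi\in\Psi[T,Z_1](X,\Theta)$ this makes $\psi$ a $C$-function and a $Z_1$-function, while assumption~(b) gives that $\psi(z,\cdot)$ is continuously differentiable on $\Theta\setminus\{\vartheta_1(z)\}$. Assumption~(a) supplies the remaining hypothesis. Because $\psi\in\Psi[T]$ is a $T_n$-function for infinitely many $n$, part~(i) of Theorem~\ref{Thm_M_est_uniq} yields that the map \eqref{function_newhanyados0} is increasing for all $x,y\in X$ with $\vartheta_1(x)<\vartheta_1(y)$, i.e., assertion~(i) of Theorem~\ref{Thm_conjecture} holds. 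Applying Theorem~\ref{Thm_conjecture} I then obtain a positive, locally absolutely continuous function $p:\Theta\to\RR$ such that $t\mapsto p(t)\psi(z,t)$ is decreasing on $\Theta$ for every $z\in X$.

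Fixing $\tau\in\Theta$, I would set
\[
  \varrho^*(z,t):=-\int_\tau^t p(s)\psi(z,s)\,\dd s=\int_\tau^t p(s)\partial_2\varrho(z,s)\,\dd s,\qquad z\in X,\ t\in\Theta.
\]
As $p$ and $\psi(z,\cdot)$ are continuous on $\Theta$, the integrand is continuous, so $\varrho^*(z,\cdot)$ is continuously differentiable with $\partial_2\varrho^*(z,t)=-p(t)\psi(z,t)$; this derivative is increasing in $t$, whence $\varrho^*(z,\cdot)$ is convex, giving~(i). For~(ii), both sums are continuously differentiable with
\[
  \frac{\dd}{\dd t}\sum_{i=1}^n\varrho(x_i,t)=-\psi_{\bx}(t),\qquad
  \frac{\dd}{\dd t}\sum_{i=1}^n\varrho^*(x_i,t)=-p(t)\psi_{\bx}(t).
\]
Because $\psi$ is a $T$-function, $\psi_{\bx}$ is positive on $\Theta\cap(-\infty,\vartheta_n(\bx))$ and negative on $\Theta\cap(\vartheta_n(\bx),\infty)$, and $p>0$; hence both derivatives are negative before $\vartheta_n(\bx)$ and positive after it, so each sum is strictly decreasing then strictly increasing, with $\vartheta_n(\bx)$ as its unique global minimizer in $\Theta$. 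This delivers the desired chain of equalities.

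The main obstacle is the regularity of $\varrho^*$ precisely at the point $\vartheta_1(z)$. Theorem~\ref{Thm_conjecture} only asserts that $t\mapsto p(t)\psi(z,t)$ is locally absolutely continuous off $\vartheta_1(z)$ and explicitly warns that it may fail to be continuous there, which would obstruct the $C^1$-property of $\varrho^*(z,\cdot)$. This is exactly where the hypothesis that $\varrho(z,\cdot)$ is continuously differentiable on the \emph{whole} of $\Theta$ (stronger than the off-$\vartheta_1(z)$ condition of Theorem~\ref{Thm_conjecture}) enters: it renders $\psi(z,\cdot)$, and hence $p(t)\psi(z,t)$, continuous across $\vartheta_1(z)$, securing the continuous differentiability and convexity of $\varrho^*(z,\cdot)$ on all of $\Theta$.
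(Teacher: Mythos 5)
Your proposal is correct and follows essentially the same route as the paper: verify the hypotheses of Theorem~\ref{Thm_conjecture} via part~(i) of Theorem~\ref{Thm_M_est_uniq}, obtain the weight $p$, define $\varrho^*$ as an antiderivative of $-p\psi$, and read off convexity and the common minimizer from the sign of the derivatives. The only cosmetic difference is your fixed base point $\tau$ in place of the paper's $\vartheta_1(z)$ as the lower limit of integration, which is immaterial since only $\partial_2\varrho^*$ enters the argument.
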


\begin{proof}
By the assumption (b), for all $z\in X$, the map $\Theta\setminus\{\vartheta_1(z)\}\ni t\mapsto \psi(z,t)$ is continuously differentiable.
According to part (i) of Theorem~\ref{Thm_M_est_uniq}, for all $x,y\in X$ with $\vartheta_1(x)<\vartheta_1(y)$, the map \eqref{function_newhanyados0} is increasing.
Thus, taking into account assumption (a) as well, we are in the position to apply Theorem~\ref{Thm_conjecture}.
Therefore, there exists a positive and locally absolutely continuous function $p:\Theta\to\RR$ such that, for all $z\in X$, the map \eqref{prod_map}
 is decreasing on $\Theta$ and locally absolutely continuous on $\Theta\setminus\{\vartheta_1(z)\}$.
Define $\varrho^*:X\times\Theta\to\RR$ by
\[
   \varrho^*(z,t)
   :=-\int_{\vartheta_1(z)}^t p(s)\psi(z,s)\,\dd s
   =\int_{\vartheta_1(z)}^t p(s)\partial_2\varrho(z,s)\,\dd s,
   \qquad z\in X,\,t\in\Theta.
\]
By the continuous differentiability of $\varrho$ in its second variable, it follows that $\psi$ is continuous in its second variable.
Using also the continuity of $p$, it follows that $\varrho^*$ is continuously differentiable in its second variable and
 \begin{align}\label{help_10}
 \partial_2\varrho^*(z,t)=-p(t)\psi(z,t), \qquad z\in X,\,t\in\Theta.
 \end{align}
Applying the decreasingness of the map \eqref{prod_map} on $\Theta$, we can see that $\partial_2\varrho^*$ is increasing on $\Theta$ in its second variable.
Consequently, $\varrho^*$ is convex on $\Theta$ in its second variable. This proves assertion \textit{(i)}.

To see that assertion \textit{(ii)} is valid, let $n\in\NN$ and $x_1,\dots,x_n\in X$ be fixed. The derivative of the map $\Theta\ni t\mapsto\sum_{i=1}^n \varrho(x_i,t)$ has the following form
\[
  \Theta\ni t\mapsto\sum_{i=1}^n \partial_2\varrho(x_i,t)
  =-\sum_{i=1}^n \psi(x_i,t).
\]
By the property $[T_n]$ of $\psi$, we have
\Eq{sign_prop}{
  \sum_{i=1}^n \psi(x_i,t)
  \begin{cases}
   >0 &\mbox{if } t<\vartheta_n(x_1,\dots,x_n), \\
   <0 &\mbox{if } t>\vartheta_n(x_1,\dots,x_n),
  \end{cases}
  \qquad t\in\Theta.
}
Therefore,
\Eq{*}{
  \Theta\ni t\mapsto\sum_{i=1}^n \partial_2\varrho(x_i,t)
  \begin{cases}
   <0 &\mbox{if } t<\vartheta_n(x_1,\dots,x_n), \\
   >0 &\mbox{if } t>\vartheta_n(x_1,\dots,x_n),
  \end{cases}
  \qquad t\in\Theta,
}
which shows that the continuous map $\Theta\ni t\mapsto\sum_{i=1}^n \varrho(x_i,t)$ is strictly decreasing on $\Theta\cap(-\infty,\vartheta_n(x_1,\dots,x_n))$ and strictly increasing on $\Theta\cap(\vartheta_n(x_1,\dots,x_n),\infty)$. This yields that
\[
  \argmin_{t\in\Theta} \sum_{i=1}^n \varrho(x_i,t)
  =\{\vartheta_n(x_1,\dots,x_n)\}.
\]
On the other hand, using \eqref{help_10}, the derivative of the map $\Theta\ni t\mapsto\sum_{i=1}^n \varrho^*(x_i,t)$ takes the form
\[
  \Theta\ni t\mapsto\sum_{i=1}^n \partial_2\varrho^*(x_i,t)
  =-\sum_{i=1}^n p(t)\psi(x_i,t).
\]
Applying \eqref{sign_prop} and the positivity of $p$, it follows that
\Eq{*}{
  \Theta\ni t\mapsto\sum_{i=1}^n \partial_2\varrho^*(x_i,t)
  \begin{cases}
   <0 &\mbox{if } t<\vartheta_n(x_1,\dots,x_n), \\
   >0 &\mbox{if } t>\vartheta_n(x_1,\dots,x_n),
  \end{cases}
  \qquad t\in\Theta.
}
As above, this implies that
\[
  \argmin_{t\in\Theta} \sum_{i=1}^n \varrho^*(x_i,t)
  =\{\vartheta_n(x_1,\dots,x_n)\}.
\]
Thus, we have completed the proof of assertion \textit{(ii)}.
\end{proof}

Concerning the verifiability of the conditions of Theorem \ref{Thm_convexity}, we mention that, given a function $\varrho$, the continuously differentiability of the map $\Theta\ni t\mapsto\varrho(z,t)$ can be checked in a standard manner.
Then one can calculate $\psi(z,t):=-\partial_2\varrho(z,t)$, $z\in X$, $t\in \Theta$.
The condition that $\psi\in \Psi[Z_1](X,\Theta)$ is verifiable, one just has to solve the equation $\partial_2\varrho(z,t) = 0$ in terms of $t\in\Theta$,
 where $z\in X$ is fixed. 
By this procedure, one can also get an expression for the function $\vartheta_1$.
The conditions (a) and (b) of Theorem \ref{Thm_convexity} can be checked by common methods of calculus.
The most difficult task could be verifying the condition that $\psi\in \Psi[T](X,\Theta)$.
In many examples, see Section 4 of Barczy and P\'ales \cite{BarPal2} and Section 3 of Barczy and P\'ales \cite{BarPal3}, we presented (generalized) $\psi$-estimators, where one can check this condition and explicitly calculate the function $\vartheta_n$ as well.
See also Examples \ref{Ex_mle} and \ref{Ex_pareto} of the present paper.

Next, we demonstrate Theorem \ref{Thm_conjecture} and the construction of the function $p$ introduced in \eqref{formula_p} (detailed in the proof of Theorem \ref{Thm_conjecture}) by giving two examples from the theory of maximum likelihood estimation.

\begin{Ex}\label{Ex_mle}
Let $\xi$ be a normally distributed random variable with mean $m\in\RR$ and with variance $\sigma^2$, where $\sigma>0$.
Let $n\in\NN$ and $x_1,\ldots,x_n\in\RR$ be a realization of a sample of size $n$ for $\xi$.
Assume that $m$ is known.
Then there exists a unique maximum likelihood estimator (MLE) of $\sigma^2$ based on $x_1,\ldots,x_n\in\RR$,
 and it takes the form \ $\widehat{\sigma^2_n}:=\frac{1}{n}\sum_{i=1}^n (x_i-m)^2$.
In the second part of Example 9 in Barczy and P\'ales \cite{BarPal2}, we have shown the existence and uniqueness of a solution of the corresponding likelihood equation using Theorem 1 in \cite{BarPal2}.
It turned out that one can choose $X=\RR$, $\Theta=(0,\infty)$, and $\psi:\RR\times(0,\infty)\to\RR$,
 \begin{align*}
    \psi(x,\sigma^2)
      = \frac{1}{2(\sigma^2)^2}\left( (x-m)^2 - \sigma^2\right) ,
           \qquad x\in\RR,\; \sigma^2>0.
 \end{align*}
Hence $\psi$ is a $T_1$-function with $\vartheta_1(x):=(x-m)^2$, $x\in\RR$, $\psi(x,\vartheta_1(x))=0$, $x\in\RR$, yielding that $\psi$ is a $Z_1$-function, for all $\sigma^2\in(0,\infty)$, there exist $x,y\in \RR$ such that $\vartheta_1(x) = (x-m)^2 < \sigma^2 < (y-m)^2 = \vartheta_1(y)$, and $\psi$ is continuously differentiable in its second variable.
Further, for all $x,y\in\RR$ with $\vartheta_1(x)< \vartheta_1(y)$, i.e., $(x-m)^2 < (y-m)^2$, we get that the function
 \eqref{function_newhanyados0} takes the form
 \begin{align*}
  \big( (x-m)^2, (y-m)^2\big)\ni\sigma^2
     \mapsto -\frac{\psi(x,\sigma^2)}{\psi(y,\sigma^2)}
       = -1 + \frac{(y-m)^2 - (x-m)^2 }{(y-m)^2 - \sigma^2},
 \end{align*}
 which is strictly increasing.
In what follows, we determine the functions $q_*$, $q^*$ and $p$ introduced in \eqref{formulae_qs} and \eqref{formula_p}, respectively.
Since
 \[
   \partial_2\psi(x,\sigma^2) = \frac{(\sigma^2)^2 - 2 \sigma^2 (x-m)^2}{2(\sigma^2)^4}
      = \frac{\sigma^2 - 2(x-m)^2}{2(\sigma^2)^3}, \qquad x\in\RR,\; \sigma^2>0,
 \]
we have
 \[
   \frac{\partial_2\psi(x,\sigma^2)}{\psi(x,\sigma^2)}
       = \frac{\sigma^2 - 2(x-m)^2}{\sigma^2( (x-m)^2 - \sigma^2)}
       = -\frac{2}{\sigma^2} - \frac{1}{(x-m)^2 - \sigma^2}
 \]
 for $x\in\RR$ and $\sigma^2>0$ satisfying $\sigma^2\ne (x-m)^2$.
Consequently, for all $\sigma^2>0$, we get
 \begin{align*}
  q_*(\sigma^2)& =\sup\bigg\{\frac{\partial_2 \psi(y,\sigma^2)}{\psi(y,\sigma^2)}\,\bigg|\,y\in \RR,\,\sigma^2 < (y-m)^2 \bigg\}\\
               & = \sup\bigg\{ -\frac{2}{\sigma^2} - \frac{1}{z-\sigma^2} \,\bigg|\, z\in\RR, \, \sigma^2 < z \bigg\}
                 = \sup\bigg(-\infty, - \frac{2}{\sigma^2}\bigg) = - \frac{2}{\sigma^2},
 \end{align*}
 and
 \begin{align*}
  q^*(\sigma^2)&=\inf\bigg\{\frac{\partial_2 \psi(x,\sigma^2)}{\psi(x,\sigma^2)}\,\bigg|\,x\in \RR,\, (x-m)^2<\sigma^2\bigg\}\\
               & = \inf\bigg\{ -\frac{2}{\sigma^2} - \frac{1}{z-\sigma^2} \,\bigg|\, z\in\RR, \, z < \sigma^2 \bigg\}
                 = \inf \bigg(- \frac{2}{\sigma^2},\infty\bigg) = - \frac{2}{\sigma^2}.
 \end{align*}
Hence, in this case, $q_*=q^*$, and $q_*:(0,\infty)\to\RR$, $q_*(s) = -\frac{2}{s}$, $s>0$.
Further, for an arbitrarily fixed $\sigma_0^2\in\RR_{++}$, the function $p:(0,\infty)\to\RR_{++}$ introduced in \eqref{formula_p} takes the form
 \begin{align*}
  p(\sigma^2) = \exp\bigg(-\int_{\sigma_0^2}^{\sigma^2} q_*(s)\,\dd s\bigg)
              = \exp\bigg(\int_{\sigma_0^2}^{\sigma^2} \frac{2}{s} \,\dd s\bigg)
              = \exp\big(2(\ln(\sigma^2) - \ln(\sigma_0^2))\big)
              = \left(\frac{\sigma^2}{\sigma_0^2}\right)^2
 \end{align*}
 for $\sigma^2>0$.
Hence, we can see that $p$ is positive, locally absolutely continuous, and, for all $x\in\RR$, the map $(0,\infty)\ni\sigma^2 \mapsto  p(\sigma^2)\psi(x,\sigma^2) = \frac{1}{2(\sigma_0^2)^2} ( (x-m)^2 - \sigma^2)$ is (strictly) decreasing, which is in accordance with Theorem \ref{Thm_conjecture}.
\proofend
\end{Ex}

\begin{Ex}\label{Ex_pareto}
Let $\xi$ be a random variable having Pareto distribution with scale parameter $1$ and shape parameter $\alpha>0$,
 i.e., $\xi$ has a density function
 \[
     f_\xi(x) := \begin{cases}
                  \alpha x^{-(\alpha+1)} & \text{if $x>1$,}\\[2mm]
                  0 & \text{if $x\leq 1$.}
               \end{cases}
 \]
First, we will establish the existence and uniqueness of a solution of the likelihood equation for $\alpha$ using Theorem 1 and 
 Proposition 2 in Barczy and P\'ales \cite{BarPal2}.
In the considered case, using the setup given before Example 9 in \cite{BarPal2}, we have $\Theta := (0,\infty)$ and $f:\RR\times (0,\infty)\to\RR$,
 \[
     f(x,\alpha) :=\begin{cases}
                  \alpha x^{-(\alpha+1)} & \text{if $x>1$, $\alpha>0$,}\\[2mm]
                  0 & \text{otherwise,}
                \end{cases}
 \]
and consequently, $\cX_f=(1,\infty)$.
Then \ $\psi:(1,\infty)\times (0,\infty)\to\RR$,
 \begin{align*}
   \psi(x,\alpha) = \partial_2 \big(\ln(f(x,\alpha))\big)
                  = \partial_2\left( \ln(\alpha) - (\alpha+1)\ln(x) \right) 
                  =  \frac{1}{\alpha} - \ln(x), \qquad  x\in(1,\infty),\;\alpha\in(0,\infty).
 \end{align*}
Then $\psi$ is continuously differentiable in its second variable, and $\psi$ is a $Z_1$-function with $\vartheta_1(x) = 1/\ln(x)$, $x\in(1,\infty)$,
 yielding that, for all $\alpha\in(0,\infty)$, there exist $x,y\in(1,\infty)$ such that 
 \[
   \vartheta_1(x) = \frac{1}{\ln(x)} < \alpha <  \frac{1}{\ln(y)} = \vartheta_1(y).
 \]  
This, together with the fact that $\psi$ is strictly decreasing and continuous in its second variable, yields that $\psi$ is a $Z$-function,
 see Proposition 2 in \cite{BarPal2}.
Further, for all $n\in\NN$ and $x_1,\ldots,x_n\in(1,\infty)$, the likelihood equation for $\alpha$ takes the form
 \[
   \sum_{i=1}^n \psi(x_i,\alpha) = \frac{n}{\alpha} - \sum_{i=1}^n \ln(x_i)=0,\qquad \alpha>0,
 \]
 which has a unique solution
 \[
  \vartheta_n(x_1,\ldots,x_n) = \frac{1}{\frac{1}{n}\sum_{i=1}^n \ln(x_i)}.
 \]
 
We call the attention that, in the present example, for all $x\in(1,\infty)$, 
 the function $(0,\infty)\ni\alpha\mapsto \psi(x,\alpha)=\frac{1}{\alpha} - \ln(x)$ is (strictly) decreasing and continuously differentiable 
 (and, hence, locally absolutely continuous).
Consequently, the function \eqref{prod_map} in part (ii) of Theorem \ref{Thm_conjecture} 
 is decreasing and continuously differentiable (and, hence, locally absolutely continuous)
 for any choice of $p(t):=c$, $t\in(0,\infty)$, where $c$ is arbitrary positive constant.  
Nonetheless, in what follows, 
 we determine the functions $q_*$, $q^*$ and $p$ introduced in \eqref{formulae_qs} and \eqref{formula_p}, respectively,
 in order to demonstrate Theorem \ref{Thm_conjecture}.

For all $x,y\in(1,\infty)$ with $\vartheta_1(x)< \vartheta_1(y)$, i.e., $y < x$, we get that the function
 \eqref{function_newhanyados0} takes the form
 \begin{align*}
  \left( \frac{1}{\ln(x)}, \frac{1}{\ln(y)} \right)\ni \alpha
       \mapsto -\frac{\psi(x,\alpha)}{\psi(y,\alpha)}
        = - \frac{1-\alpha\ln(x)}{1-\alpha\ln(y)}
        = -1 + \frac{\ln(x) - \ln(y)}{\frac{1}{\alpha} - \ln(y)},
 \end{align*}
 which is strictly increasing.
Since
 \[
   \partial_2\psi(x,\alpha) = -\frac{1}{\alpha^2}, \qquad x>1,\; \alpha>0,
 \]
we have
 \[
   \frac{\partial_2\psi(x,\alpha)}{\psi(x,\alpha)}
       = \frac{ -\frac{1}{\alpha^2} }{ \frac{1}{\alpha} - \ln(x) }
       = -\frac{1}{ \alpha - \alpha^2 \ln(x)},
       \qquad x>1,\; \alpha>0.
 \]
Consequently, for all $\alpha>0$, we get
 \begin{align*}
  q_*(\alpha)& =\sup\bigg\{ -\frac{1}{ \alpha - \alpha^2 \ln(y)}  \,\bigg|\,y>1,\,\alpha < \frac{1}{\ln(y)} \bigg\}\\
               & = \sup\bigg\{ -\frac{1}{ \alpha - \alpha^2 \ln(y)} \,\bigg|\, y \in\big(1,\ee^{\frac{1}{\alpha}}\big) \bigg\}
                 = \sup\bigg(-\infty, - \frac{1}{\alpha}\bigg) = - \frac{1}{\alpha},
 \end{align*}
 and
 \begin{align*}
  q^*(\alpha)&=\inf\bigg\{ -\frac{1}{ \alpha - \alpha^2 \ln(x)}  \,\bigg|\,x>1,\,  \frac{1}{\ln(x)} < \alpha \bigg\}\\
               & = \inf\bigg\{ -\frac{1}{ \alpha - \alpha^2 \ln(x)}\,\bigg|\,  x > \ee^{\frac{1}{\alpha}}   \bigg\}
                 = \inf \big(0,\infty\big) = 0.
 \end{align*}
Hence, in this case, $q_*<q^*$, and $q_*:(0,\infty)\to\RR$, $q_*(s) = -\frac{1}{s}$, $s>0$.
Further, for an arbitrarily fixed $\alpha_0>0$, the function $p:(0,\infty)\to\RR_{++}$ introduced in \eqref{formula_p} takes the form
 \begin{align*}
  p(\alpha) = \exp\bigg(-\int_{\alpha_0}^{\alpha} q_*(s)\,\dd s\bigg)
              = \exp\bigg(\int_{\alpha_0}^{\alpha} \frac{1}{s} \,\dd s\bigg)
              = \exp\big(\ln(\alpha) -  \ln(\alpha_0)\big)
              = \frac{\alpha}{\alpha_0},
              \qquad \alpha>0.
 \end{align*}
Therefore, we can see that $p$ is positive, locally absolutely continuous, and, for all $x>1$, 
 the map $(0,\infty)\ni \alpha \mapsto  p(\alpha)\psi(x,\alpha) 
    = \frac{\alpha}{\alpha_0} \left(\frac{1}{\alpha} - \ln(x) \right) 
    = \frac{1}{\alpha_0}(1-\alpha\ln(x))$ is (strictly) decreasing, which is in accordance with Theorem \ref{Thm_conjecture}. 
\proofend
\end{Ex}

\section{Measurability of generalized $\psi$-estimators}
\label{Sec_measurability}

In Barczy and P\'ales \cite[Proposition 1]{BarPal2}, we proved that if $(X,\cX)$ is a measurable space, $n\in\NN$, $\psi\in\Psi[Z_n](X,\Theta)$, and $\psi$ is measurable in its first variable (i.e., for all $\vartheta\in\Theta$, the mapping $X\ni x\mapsto \Psi(x,\vartheta)$ is measurable), then $\vartheta_{n,\psi}:X^n\to \Theta$ is measurable with respect to the $n$-fold product sigma-algebra $\cX^n$ and the Borel sigma-algebra on $\Theta$.
We are going to prove that, under some additional assumptions on $X$ and $\psi$, a converse of this statement holds as well.
These results are important from theoretical point of view in the sense that, roughly speaking, together they state that the measurability of a generalized $\psi$-estimator is equivalent to the measurability of the corresponding function $\psi$ in its first variable provided that the underlying measurable space $X$ has a measurable diagonal, $\psi$ has the property $[Z]$ and fulfills another additional assumption.

First, we recall some notions and results on measurable spaces, which will play an important role in our proof.
A measurable space $(X,\cX)$ is said to have a measurable diagonal if the set $\{(x_1,x_2)\in X\times X : x_1=x_2\}$ belongs to the $2$-fold product sigma-algebra $\cX^2:=\cX\otimes\cX$.
We say that a family $\cE$ of subsets of a nonempty set $X$ separates the points of $X$ if for all two distinct points $x,y\in X$, there exists a set $E\in \cE$ such that either $x\in E$ and $y\notin E$ or $y\in E$ and $x\notin E$.
We say that a sigma-algebra $\cX$ on $X$ is countably separated if there exists an at most countable family $\cE$ of $\cX$ which separates the points of $X$.
We say that a sigma-algebra on $X$ is countably generated (or separable) if there exists an at most countable family $\cE$ of $\cX$, which generates $\cX$.
For example, the Borel sigma-algebra of a separable metric space is always countably separated and countably generated.
Indeed, a separable metric space has a countable dense subset and one can take $\cE$ to be the family of open balls around the points of this dense subset with rational radii.
Given a measurable space $(X,\cX)$ and a non-empty subset $A$ of $X$, by the trace of $\cX$ on $A$, we mean the family $\{B\cap A : B\in\cX\}$.

For the following result on measurable spaces with measurable diagonals, see Bogachev \cite[Theorem 6.5.7]{Bog}.

\begin{Thm}[Bogachev \cite{Bog}]\label{Thm_Bog2}
Let $(X,\cX)$ be a measurable space.
The following statements are equivalent:
 \begin{itemize}
   \item[(i)] $\cX$ is a countably separated sigma-algebra,
   \item[(ii)] there exists an injective measurable function $f:X\to [0,1]$,
   \item[(iii)] $(X,\cX)$ has a measurable diagonal, i.e., $\{(x,x): x\in X\}\in \cX\otimes\cX$,
   \item[(iv)] there exists a separable sigma-algebra $\cX_0$ such that $\cX_0\subset \cX$ and all the singletons $\{x\}$, $x\in X$, belong to $\cX_0$.
 \end{itemize}
\end{Thm}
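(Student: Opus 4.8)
The plan is to prove the four-way equivalence by establishing the cycle $(i)\Rightarrow(ii)\Rightarrow(iii)\Rightarrow(i)$ and then the separate equivalence $(i)\Leftrightarrow(iv)$. Whenever $\cX$ is countably separated, I fix an at most countable family $\cE=\{E_k:k\in\NN\}\subseteq\cX$ separating the points of $X$. To obtain $(i)\Rightarrow(ii)$, I would encode membership in this family by a Cantor-type ternary embedding, defining $f:X\to[0,1]$ by
\[
  f(x):=\sum_{k=1}^\infty \frac{2}{3^k}\,\mathbf{1}_{E_k}(x),\qquad x\in X.
\]
Since each partial sum is $\cX$-measurable and the series converges uniformly, $f$ is measurable, and $\sum_{k\geq1}2\cdot 3^{-k}=1$ shows that its range lies in $[0,1]$. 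Injectivity is the point where the choice of digits in $\{0,2\}$ matters: two base-$3$ expansions with digits in $\{0,2\}$ agree precisely when their digit sequences agree, so if $x\neq y$ then some $E_k$ separates $x$ and $y$, the membership sequences differ, and hence $f(x)\neq f(y)$.

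The implication $(ii)\Rightarrow(iii)$ is then routine: the map $F:X\times X\to[0,1]^2$, $F(x,y):=(f(x),f(y))$, is measurable from $\cX\otimes\cX$ to the Borel sigma-algebra, and the injectivity of $f$ gives $\{(x,x):x\in X\}=F^{-1}(D)$, where $D:=\{(u,u):u\in[0,1]\}$ is closed, hence Borel, in $[0,1]^2$; thus the diagonal lies in $\cX\otimes\cX$.

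The hard part will be $(iii)\Rightarrow(i)$, which rests on the principle that every member of a product sigma-algebra depends on only countably many generators. Concretely, I would first check that $\bigcup\{\sigma(\cC)\otimes\sigma(\cC):\cC\subseteq\cX\text{ at most countable}\}$ is a sigma-algebra (closure under countable unions uses that a countable union of countable families is countable) containing every rectangle $A\times B$ with $A,B\in\cX$, so it equals $\cX\otimes\cX$. Applying this to the diagonal $\Delta:=\{(x,x):x\in X\}$ yields a countable family $\cE\subseteq\cX$ with $\Delta\in\sigma(\cE)\otimes\sigma(\cE)$, and it remains to show that this $\cE$ separates points. Suppose, for contradiction, that distinct $x,y\in X$ are not separated by $\cE$. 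A first good-sets argument shows that $\{A\in\sigma(\cE):x\in A\Leftrightarrow y\in A\}$ is a sigma-algebra containing $\cE$, hence all of $\sigma(\cE)$, so $x$ and $y$ are indistinguishable by $\sigma(\cE)$. A second good-sets argument shows that $\{S\in\sigma(\cE)\otimes\sigma(\cE):(x,x)\in S\Leftrightarrow(x,y)\in S\}$ is a sigma-algebra containing all rectangles $A\times B$ with $A,B\in\sigma(\cE)$, hence contains $\Delta$. Since $(x,x)\in\Delta$, this forces $(x,y)\in\Delta$, i.e.\ $x=y$, a contradiction; therefore $\cE$ separates points and $\cX$ is countably separated.

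Finally, for $(i)\Leftrightarrow(iv)$ I would set $\cX_0:=\sigma(\cE)$, which is separable by construction; each singleton is recovered as the countable intersection $\{x\}=\bigcap_{k:\,x\in E_k}E_k\cap\bigcap_{k:\,x\notin E_k}(X\setminus E_k)\in\cX_0$, giving $(iv)$. Conversely, if $\cX_0\subseteq\cX$ is separable with an at most countable generator $\cE$ and contains every singleton, then the same good-sets argument as above shows that points not separated by $\cE$ would be indistinguishable by all of $\sigma(\cE)=\cX_0$, contradicting $\{x\}\in\cX_0$; hence $\cE$ separates the points of $X$ and $(i)$ holds. The only genuinely delicate step is the countable-generation principle underlying $(iii)\Rightarrow(i)$, together with the two monotone-class verifications; the remaining implications amount to standard measure-theoretic bookkeeping.
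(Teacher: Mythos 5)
This statement is not proved in the paper at all: it is imported verbatim from Bogachev (Theorem 6.5.7), so there is no in-paper argument to compare yours against. Judged on its own, your proof is correct and complete. The cycle $(i)\Rightarrow(ii)\Rightarrow(iii)\Rightarrow(i)$ together with $(i)\Leftrightarrow(iv)$ does yield the four-way equivalence; the Cantor-type ternary embedding gives injectivity exactly because digits are restricted to $\{0,2\}$ (if the first disagreeing index is $k_0$, the values differ by at least $3^{-k_0}$); and $F^{-1}$ of the closed diagonal of $[0,1]^2$ recovers the diagonal of $X$ by injectivity. The one genuinely delicate step, $(iii)\Rightarrow(i)$, is handled correctly: the class $\bigcup\{\sigma(\cC)\otimes\sigma(\cC):\cC\subseteq\cX\text{ countable}\}$ is closed under countable unions because a countable union of countable generating families is countable, it contains all rectangles, hence exhausts $\cX\otimes\cX$; and the two good-sets arguments legitimately propagate ``$x$ and $y$ are $\sigma(\cE)$-indistinguishable'' first to $\sigma(\cE)$ and then to $\sigma(\cE)\otimes\sigma(\cE)\ni\Delta$, forcing $x=y$. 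The singleton formula $\{x\}=\bigcap_{k:\,x\in E_k}E_k\cap\bigcap_{k:\,x\notin E_k}(X\setminus E_k)$ is exactly where separation of points is used for $(i)\Rightarrow(iv)$, and the converse correctly uses $\{x\}\in\cX_0$ to contradict indistinguishability. This is essentially the standard textbook proof; since the paper only cites the result, your write-up is a legitimate self-contained substitute for the reference.
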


Next, we formulate a consequence of Theorem 6.5.7 in Bogachev \cite{Bog}.

\begin{Lem}\label{Lem_nfold_diag_measurable}
Let $(X,\cX)$ be a measurable space such that there is a countable subset of $\cX$, which separates the points of $X$ (equivalently, $(X,\cX)$ has a measurable diagonal, see Theorem \ref{Thm_Bog2}).
Then, for each $n\in\NN$, the set
 \begin{align*}
    \Big\{ (x_1,\ldots,x_n)\in X^n : x_1=\cdots = x_n \Big\}
 \end{align*}
is in the $n$-fold product sigma-algebra $\cX^n$.
\end{Lem}

\begin{proof}
For each $n\in\NN$, let us introduce the notation
 \begin{align}\label{Def_Diag_n}
   \Diag_n X:=\Big\{ (\underbrace{x,\ldots, x}_n) : x\in X \Big\}.
 \end{align}
For $n=2$, the statement follows from the equivalence of (i) and (iii) of Theorem 6.5.7 in Bogachev \cite{Bog} (see also Theorem \ref{Thm_Bog2}).
For each $n\in\NN$, $n\geq3$, we have that
 \begin{align}\label{help_7}
    \Diag_n X
       &= \bigcap_{i=1}^{n-1} \Big\{ (x_1,\ldots, x_n)\in X^n : x_i=x_{i+1} \Big\}\\\nonumber
       &= \big((\Diag_2 X) \times X^{n-2}\big) \cap \bigg(\bigcap_{i=2}^{n-2} X^{i-1}\times (\Diag_2 X) \times X^{n-i-1}\bigg)
       \cap \big(X^{n-2}\times (\Diag_2 X)\big).
 \end{align}
By the equivalence of (i) and (iii) of Theorem 6.5.7 in Bogachev \cite{Bog}, we have that $\Diag_2 X$ belongs to the $2$-fold product sigma algebra $\cX^2$.
Using that the $n$-fold product sigma-algebra $\cX^n$ is generated by the family of all the sets
 $A_1\times \cdots\times A_n$ with $A_i\in\cX$, $i\in\{1,\ldots,n\}$, by Klenke \cite[part (i) of Theorem 14.12]{Kle}, we have that
 \[
   \cX^2\otimes \cX^{n-2}\subseteq \cX^n,\qquad
   \cX^{i-1}\otimes \cX^2\otimes \cX^{n-i-1}\subseteq \cX^n,\quad i\in\{2,\ldots,n-2\}, \qquad  \cX^{n-2}\otimes \cX^2\subseteq \cX^n.
 \]
Since
 \begin{align*}
    (\Diag_2 X) \times X^{n-2}&\in \cX^2\otimes \cX^{n-2}\subseteq \cX^n,\\
    X^{i-1}\times (\Diag_2 X) \times X^{n-i-1} &\in \cX^{i-1}\otimes \cX^2\otimes \cX^{n-i-1} \subseteq \cX^n,\qquad i\in\{2,\ldots,n-2\},\\
    X^{n-2}\times (\Diag_2 X)&\in \cX^{n-2}\otimes \cX^2\subseteq \cX^n,
 \end{align*}
 using \eqref{help_7}, we arrive at $\Diag_n X\in \cX^n$, as desired.
\end{proof}

\begin{Thm}\label{Thm_measurability}
Let $(X,\cX)$ be a measurable space such that there is a countable subset of $\cX$, which separates the points of $X$ (equivalently, $(X,\cX)$ has a measurable diagonal, see Theorem \ref{Thm_Bog2}).
Let $\psi\in\Psi[Z](X,\Theta)$ be such that, for all $r\in\Theta$, the function $X\ni y\mapsto \psi(y,r)$ takes positive and negative values as well.
Suppose that, for each $n\in\NN$, the map $\vartheta_{n,\psi}:X^n\to \Theta$ is measurable with respect to
 the $n$-fold product sigma-algebra $\cX^n$ and the Borel sigma-algebra on $\Theta$.
Then $\psi$ is measurable in its first variable.
\end{Thm}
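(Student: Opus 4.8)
The plan is to fix an arbitrary $r\in\Theta$ and show that the single-variable map $X\ni x\mapsto\psi(x,r)$ is $\cX$-measurable; since $r$ is arbitrary, this is exactly the asserted measurability of $\psi$ in its first variable. I would first split $X$ according to the sign of $\psi(\cdot,r)$. By the property $[Z_1]$, for each $x\in X$ one has $\psi(x,r)>0$, $\psi(x,r)=0$ or $\psi(x,r)<0$ precisely when $r<\vartheta_1(x)$, $r=\vartheta_1(x)$ or $r>\vartheta_1(x)$, respectively. As $\vartheta_1=\vartheta_{1,\psi}$ is measurable (the $n=1$ instance of the hypothesis), the three sets $X_r^+:=\{x:\vartheta_1(x)>r\}$, $X_r^0:=\{x:\vartheta_1(x)=r\}$ and $X_r^-:=\{x:\vartheta_1(x)<r\}$ form a measurable partition of $X$, and the extra assumption that $\psi(\cdot,r)$ attains positive and negative values guarantees that $X_r^+$ and $X_r^-$ are both nonempty. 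On $X_r^0$ the map $\psi(\cdot,r)$ is identically $0$, hence trivially measurable, so the task reduces to the two sign-definite pieces.

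On $X_r^+$ I would calibrate $\psi(\cdot,r)$ against a single fixed reference point. Choose once and for all $y_0\in X_r^-$, so $\psi(y_0,r)<0$. The key observation is that integer-weighted sums are accessible through the $[Z]$-maps: for $k,m\in\NN$ the function $t\mapsto k\psi(x,t)+m\psi(y_0,t)$ is the fold $\sum_{i=1}^{k+m}\psi(w_i,t)=\psi_{\bw}(t)$ with $\bw=(\underbrace{x,\dots,x}_{k},\underbrace{y_0,\dots,y_0}_{m})$, whose point of sign change is $\vartheta_{k+m}(\bw)$. Consequently $k\psi(x,r)+m\psi(y_0,r)>0$ holds if and only if $r<\vartheta_{k+m}(\bw)$, which in turn is equivalent to $\rho(x)>m/k$ for the ratio $\rho(x):=-\psi(x,r)/\psi(y_0,r)\in(0,\infty)$. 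Thus for every rational $q=m/k$ the level set $\{x\in X_r^+:\rho(x)>q\}$ equals $X_r^+\cap\{x:\vartheta_{k+m}(\bw)>r\}$, and taking countable unions over rationals exceeding a given real threshold shows that $\rho$ is measurable on $X_r^+$. Since $\psi(x,r)=-\psi(y_0,r)\,\rho(x)$ with $-\psi(y_0,r)$ a positive constant, $\psi(\cdot,r)$ is measurable on $X_r^+$. The treatment of $X_r^-$ is entirely symmetric, using a fixed $y_1\in X_r^+$ and the reconstruction $\psi(x,r)=-\psi(y_1,r)\,\sigma(x)$ with $\sigma(x):=-\psi(x,r)/\psi(y_1,r)$ recovered from the conditions $\vartheta_{k+m}(\underbrace{x,\dots,x}_{k},\underbrace{y_1,\dots,y_1}_{m})<r$. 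Combining the three pieces of the measurable partition then yields the measurability of $x\mapsto\psi(x,r)$.

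The step that requires care—and where the measurable-diagonal hypothesis enters—is the claim that, for fixed $y_0$ and fixed $k,m$, the \emph{partial-diagonal evaluation} $x\mapsto\vartheta_{k+m}(\underbrace{x,\dots,x}_{k},\underbrace{y_0,\dots,y_0}_{m})$ is $\cX$-measurable. I would obtain this by writing the map as the composition $\vartheta_{k+m}\circ\delta$, where $\delta:X\to X^{k+m}$ sends $x$ to the indicated tuple; checking measurability of $\delta$ on the generating rectangles of $\cX^{k+m}$ is immediate, and $\vartheta_{k+m}$ is measurable by hypothesis. Equivalently, the relevant slices are read off from the fact, supplied by Lemma \ref{Lem_nfold_diag_measurable}, that the diagonal sets $\Diag_n X$ belong to $\cX^n$, which is precisely what makes such repeated-coordinate substitutions compatible with the product $\sigma$-algebra. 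I expect the only genuinely delicate points to be this measurability of the diagonal/section maps and the verification that countably many rational thresholds suffice to pin down the ratios $\rho$ and $\sigma$; once these are in place, the sign-based partition and the constant-factor calibration finish the argument.
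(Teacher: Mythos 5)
Your proof is correct, but it takes a genuinely different route from the paper's. The paper works at the level of the two-variable set $\{(x,y)\in X^2 : k\psi(x,r)+m\psi(y,r)<0\}$: it first identifies $\vartheta_{n,\psi}^{-1}((-\infty,r))$ with $\{(x_1,\dots,x_n):\psi(x_1,r)+\cdots+\psi(x_n,r)<0\}$ using $[T]$ and $[Z]$ (the same equivalence you exploit), then intersects this set with $(\Diag_k X)\times(\Diag_m X)$ --- this is where Lemma~\ref{Lem_nfold_diag_measurable} and hence the measurable-diagonal hypothesis enter --- transports it to $\cX^2$ via the bimeasurable bijections $\pi_k\otimes\pi_m$ built on the trace $\sigma$-algebras of the diagonals, takes sections in $y$, and only at the very end uses the assumption that $\psi(\cdot,r)$ attains both signs, namely to produce a dense set of thresholds $c$ with $\{x:\psi(x,r)<c\}\in\cX$. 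You instead use that assumption at the outset to pick reference points $y_0\in X_r^-$ and $y_1\in X_r^+$, and you obtain the level sets of the ratio $\rho$ directly by pulling Borel sets back through $\vartheta_{k+m}\circ\delta$, where $\delta(x)=(x,\dots,x,y_0,\dots,y_0)$. One correction to your closing paragraph: the measurable-diagonal hypothesis does \emph{not} enter your argument at that step, or anywhere else. The insertion map $\delta$ is $\cX$--$\cX^{k+m}$ measurable over an arbitrary measurable space, since its coordinates are the identity and constants (preimages of measurable rectangles are finite intersections of elements of $\cX$, possibly empty); this is unrelated to whether $\Diag_n X\in\cX^n$, so your ``equivalently'' sentence is a non sequitur. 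The upshot is that your argument establishes the converse implication \emph{without} the countable-separation hypothesis, a mild strengthening of the stated theorem; the paper needs the diagonal only because it insists on first proving the joint $\cX^2$-measurability of $\{(x,y):\psi(x,r)<-\lambda\psi(y,r)\}$ before sectioning, which is a stronger intermediate fact than what the final conclusion requires.
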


\begin{proof}
For each $n\in\NN$, let us furnish the set $\Diag_n X$ (introduced in \eqref{Def_Diag_n}) with the sigma-algebra $\cS_n:= \cX^n\cap \Diag_n X$, i.e., $\cS_n$ is the trace of $\cX^n$ on $\Diag_n X$.
Then $\cS_n$ is indeed a sigma-algebra on $\Diag_n X$, since $\Diag_n X\in \cS_n$; $\cS_n$ is closed under
the set-theoretic complement due to the fact that $A\setminus (B\cap A) = (A\setminus B)\cap A$ for any sets $A$ and $B$; and $\cS_n$ is trivially closed under countable unions.
Consequently, $(\Diag_n X, \cS_n)$ is a measurable space for each $n\in\NN$.

For each $n\in\NN$, let the projection $\pi_n:\Diag_n X\to X$ be given by
\Eq{*}{
  \pi_n(\underbrace{x,\ldots, x}_{n}):=x, \qquad x\in X.
}
Then $\pi_n$ is a bijection between $\Diag_n X$ and $X$.
We are going show that $\pi_n$ and its inverse $\pi_n^{-1}$ are measurable, i.e., $\pi_n$ is bimeasurable.
For all $A\in\cX$, we have that
 \begin{align*}
  \pi_n^{-1}(A)
  & =\Big\{ (\underbrace{x,\ldots, x}_{n}) \in \Diag_n X : \pi_n( \underbrace{x,\ldots, x}_{n} )\in A\Big\} \\
  & =\Big\{ (\underbrace{x,\dots, x}_{n}) \in \Diag_n X : x\in A\Big\} \\
  &= A^n\cap\Diag_n X \in\cX^n\cap\Diag_n X=\cS_n.
 \end{align*}
Therefore, for each $n\in\NN$, $\pi_n$ is measurable.
To prove that $\pi_n^{-1}$ is measurable,
we need to show that $(\pi_n^{-1})^{-1}(B) = \pi_n(B)\in\cX$ for all $B\in\cS_n$.
Using that the trace of a sigma-algebra generated by a family of subsets (generating system) on a nonempty subset coincides
 with the generated sigma-algebra of the trace of the generating system on the given nonempty subset (see, e.g., Klenke \cite[Corollary 1.83]{Kle}), we have that the family
 \[
  \big\{(A_1\times\cdots\times A_n)\cap\Diag_n X\mid A_1,\dots,A_n\in\cX\big\}
 \]
forms a generating system of the sigma-algebra $\cS_n$.
Consequently, to show the measurability of $\pi_n^{-1}$, it is enough to check that
 $\pi_n((A_1\times\cdots\times A_n)\cap\Diag_n X)\in\cX$ (see, e.g., Klenke \cite[Theorem 1.81]{Kle}).
For all $A_1,\dots,A_n\in\cX$, we have that
$$
\pi_n((A_1\times\cdots\times A_n)\cap\Diag_n X)
=\{x\in X\mid (x,\dots,x)\in A_1\times\cdots\times A_n\}
=A_1\cap\cdots\cap A_n\in\cX.
$$
This proves that $\pi_n^{-1}$ is measurable, henceforth, $\pi_n$ is bimeasurable.

For each $k,m\in \NN$, let us introduce the projection $\pi_k\otimes\pi_m: \Diag_k X\times\Diag_m X \to X^2$,
 \[
    (\pi_k\otimes\pi_m)(u,v):=(\pi_k(u),\pi_m(v)),
    \qquad  (u,v)\in \Diag_k X\times\Diag_m X.
 \]
We check that, for each $k,m\in \NN$, $\pi_k\otimes\pi_m$ is a bimeasurable bijection with respect to the sigma-algebras $\cS^k\otimes\cS^m$ and $\cX^2$.
To show the measurability of $\pi_k\otimes\pi_m$, it is enough to check that
 \[
  (\pi_k\otimes \pi_m)^{-1}(A_1\times A_2)\in \cS^k\otimes\cS^m, \qquad A_1,A_2\in\cX.
 \]
Since
 \begin{align*}
    (\pi_k\otimes \pi_m)^{-1}(A_1\times A_2)
       & = \Big\{ (u,v)\in \Diag_k X\times\Diag_m X : (\pi_k\otimes \pi_m)(u,v)\in A_1\times A_2   \Big\}\\
       & = \Big\{ (u,v)\in \Diag_k X\times\Diag_m X : \pi_k(u)\in A_1, \pi_m(v)\in A_2   \Big\}  \\
       & = \Big\{ u \in \Diag_k X: \pi_k(u)\in A_1 \Big\}
           \times \Big\{ v \in \Diag_m X: \pi_m(v)\in A_2 \Big\}\\
       & = \pi_k^{-1}(A_1) \times \pi_m^{-1}(A_2)
         \in \cS_k\otimes \cS_m,
 \end{align*}
where, at the last step, we used that $\pi_n$ is a measurable bijection for each $n\in\NN$.
To show the measurability of $(\pi_k\otimes\pi_m)^{-1}$, it is enough to check that
 \[
    ((\pi_k\otimes\pi_m)^{-1})^{-1}(B_1\times B_2) = (\pi_k\otimes\pi_m)(B_1\times B_2)\in\cX^2,
        \qquad B_1\in\cS_k,\; B_2\in\cS_m.
 \]
Since
 \begin{align*}
    (\pi_k\otimes \pi_m)(B_1\times B_2)
       & = \Big\{ (\pi_k\otimes \pi_m)(u,v): u\in B_1, v\in B_2 \Big\}\\
       & = \Big\{  (\pi_k(u),\pi_m(v))  :  u\in B_1, v\in B_2  \Big\}  \\
       & = \Big\{ \pi_k(u) : u\in B_1 \Big\}
           \times \Big\{ \pi_m(v) : v\in B_2 \Big\}\\
       & = \pi_k(B_1) \times \pi_m(B_2)
         \in \cX\otimes \cX = \cX^2,
 \end{align*}
where, at the last step, we used that $\pi_n^{-1}$ is a measurable bijection for each $n\in\NN$.

For any function $\psi\in\Psi(X,\Theta)$ having the property $[T]$ and for all $r\in\Theta$ and $n\in\NN$,
 it holds that
 \begin{align}\label{help_9}
  \begin{split}
   \vartheta_{n,\psi}^{-1}((-\infty, r))
       & = \big\{ (x_1,\dots,x_n)\in X^n : \vartheta_{n,\psi}(x_1,\ldots,x_n)<r \big\} \\
       & \subseteq \big\{(x_1,\dots,x_n)\in X^n :  \psi(x_1,r)+\dots+\psi(x_n,r)<0\big\}.
  \end{split}
 \end{align}
Using that the function $\psi$ has the property $[Z]$ as well, the reverse inclusion in \eqref{help_9} holds as well, i.e., the inclusion in \eqref{help_9} is in fact an equality.
This, together with the assumption that the mapping $\vartheta_{n,\psi}:X^n\to \Theta$ is measurable, yields that
 \begin{align}\label{help_8}
  \{(x_1,\dots,x_n)\in X^n :  \psi(x_1,r)+\dots+\psi(x_n,r)<0\}
    = \vartheta_{n,\psi}^{-1}((-\infty, r))\in\cX^n
 \end{align}
for all $r\in\Theta$ and $n\in\NN$.

Using that there is a countable subset of $\cX$ that separates the points of $X$, by Lemma \ref{Lem_nfold_diag_measurable}, we also have $\Diag_n X \in\cX^n$ for each $n\in\NN$.
Hence, for each $k,m\in\NN$, the set
 \begin{align*}
   &\big\{  (x_1,\ldots,x_{k+m})\in X^{k+m} : x_1=\cdots=x_k, \; x_{k+1}=\cdots=x_{k+m} \big\} \\
   &\quad  = \big\{  (x_1,\ldots,x_{k+m})\in X^{k+m} : x_1=\cdots=x_k\big\}
        \cap \big\{  (x_1,\ldots,x_{k+m})\in X^{k+m} : x_{k+1}=\cdots=x_{k+m} \big\}\\
   &\quad = \big((\Diag_k X) \times X^m  \big) \cap \big( X^k \times (\Diag_m X)  \big)
   = (\Diag_k X) \times (\Diag_m X)
 \end{align*}
belongs to $\cX^k\otimes\cX^m = \cX^{k+m}$ (this equality follows, e.g., from part (i) of Theorem 14.12 in Klenke \cite{Kle}, namely,
 $\cX^{k+m}$ is generated by the sets of the form $A\times B$, where $A\in\cX^k$ and $B\in\cX^m$).
As a consequence, using \eqref{help_8} and that a sigma-algebra is closed under intersection, for each $k,m\in\NN$ and $r\in\Theta$, we have that the set
 \begin{align*}
  S_{k,m,r}:=\big\{  (x_1,\ldots,x_{k+m})\in X^{k+m} &: \phantom{:\;\;} \psi(x_1,r)+\cdots+\psi(x_{k+m},r) < 0 \big\}\cap\big((\Diag_k X) \times (\Diag_m X)\big) \\
  =\big\{  (x_1,\ldots,x_{k+m})\in X^{k+m} &: x_1=\cdots=x_k, \; x_{k+1}=\cdots=x_{k+m},\\
    &\phantom{:\;\;} \psi(x_1,r)+\cdots+\psi(x_{k+m},r) < 0 \big\}\\
  =\big\{  (x_1,\ldots,x_{k+m})\in X^{k+m} &: x_1=\cdots=x_k, \; x_{k+1}=\cdots=x_{k+m},
  \\&\phantom{:\;\;}
  k\psi(x_1,r)+m\psi(x_{k+1},r) < 0 \big\}
 \end{align*}
belongs to $\cX^{k+m}$.
Note also that $S_{k,m,r}\subseteq(\Diag_k X)\times(\Diag_m X)\subseteq X^k\times X^m$.
Therefore, we obtain that
 \[
    S_{k,m,r} \in (\cX^k\otimes\cX^m) \cap ((\Diag_k X)\times(\Diag_m X))
               = \cS_k\otimes\cS_m,
 \]
where at the equality we used again that the trace of a sigma-algebra generated by a family of subsets (generating system)
on a nonempty subset coincides with the generated sigma-algebra of the trace of the generating system on the given nonempty subset (see, e.g., Klenke \cite[Corollary 1.83]{Kle}).

Note that, for all $r\in\Theta$ and for each $k,m\in\NN$, we have that
 \begin{align*}
  \Big\{ (x,y)\in X^2 :  \psi(x,r) + \frac{m}{k}\psi(y,r)<0\Big\}
    &= \{ (x,y)\in X^2 :  k\psi(x,r) + m\psi(y,r)<0\} \\
    &= (\pi_k\otimes\pi_m)(S_{k,m,r}).
  \end{align*}
Since $S_{k,m,r}\in \cS_k\otimes\cS_m$ and $\pi_k\otimes\pi_m$ is a bimeasurable bijection,
we have that $(\pi_k\otimes\pi_m)(S_{k,m,r})\in\cX^2$.
Consequently (replacing $\frac{m}{k}$ by $\lambda$),
for all $r\in\Theta$ and all rational numbers $\lambda>0$, we get
 \begin{align*}
  \big\{ (x,y)\in X^2 :  \psi(x,r)<-\lambda\psi(y,r) \big\}\in\cX^2.
 \end{align*}
Therefore, using that the sections of a $\cX^2$-measurable set are $\cX$-measurable sets (see, e.g., Bogachev \cite[Proposition 3.3.2]{Bog}), for all $r\in\Theta$, for all rational numbers $\lambda>0$ and for all $y\in X$, we get that
 \[
  \{x\in X :  \psi(x,r)<-\lambda\psi(y,r)\} \in\cX.
 \]
By the assumption that, for all $r\in\Theta$, the function $X\ni y\mapsto \psi(y,r)$ takes positive and negative values as well, it follows that there exists a sequence $(c_n)_{n\in\NN}$ of real numbers such that the set $\{c_n : n\in\NN\}$ is dense in $\RR$, and
\Eq{*}{
  \{x\in X :  \psi(x,r)<c_n\}\in\cX, \qquad r\in\Theta,\quad n\in\NN.
}
Let $c\in\RR$ be arbitrary and choose a subsequence $(c_{n_k})_{k\in\NN}$ of $(c_n)_{n\in\NN}$ such that $c-\frac{1}{k}<c_{n_k}<c$, $k\in\NN$.
Then $c_{n_k}\uparrow c$ as $k\to\infty$ and
\Eq{*}{
  \{x\in X :  \psi(x,r)<c\}
  =\bigcup_{k\in\NN}\{x\in X : \psi(x,r)<c_{n_k}\}\in\cX,
 }
where we used that a sigma-algebra is closed under countable unions.
\end{proof}

\section{Concluding remarks}

In this paper, we have investigated monotone representation and measurability properties 
 of generalized $\psi$-estimators introduced by Barczy and P\'ales \cite{BarPal2}.
Such estimators are generalizations of usual $\psi$-estimators in the sense that 
  we are searching for a point of sign change of an appropriate function instead of a zero.
Our first main result, roughly speaking, states that, given a generalized $\psi$-estimator with some 
 regularity assumptions on $\psi$, one can construct this estimator in terms of 
 a modified version of the function $\psi$, which is decreasing in its second variable. 
We highlight our construction in question by investigating the maximum likelihood
 estimator of the mean of a normally distributed random variable with a given variance, 
 and that of the shape parameter of a random variable with Pareto distribution having scale parameter 1.
We also interpret our result as a bridge from a nonconvex optimization problem to a convex one. 
As the second main result of the paper, supposing that the underlying sample space (measurable space) has a measurable diagonal and some additional assumptions on $\psi$, we show that the measurability of a generalized $\psi$-estimator is equivalent to the measurability of
 $\psi$ in its first variable.
We also discuss two particular $\psi$-estimators, the so-called $t$-score moment estimator and the trimmed moment estimator, that are popular in robust statistics.
We can conclude that we derived two new interesting properties of generalized $\psi$-estimators.

\section*{Acknowledgements}
We would like to thank B\'ela Nagy for pointing out the equation \eqref{help_7}, which resulted in a short proof of Lemma \ref{Lem_nfold_diag_measurable}.
We acknowledge the valuable suggestions from the referee.

\section*{Statements and Declarations}

The authors declare that they have no known competing financial interests or personal relationships that could have appeared to influence the work presented in this paper.

\bibliographystyle{plain}

\end{document}